\newtheorem{thm}{Theorem}[section]
\newtheorem{cor}[thm]{Corollary}
\newtheorem{lemma}[thm]{Lemma}
\newtheorem{prop}[thm]{Proposition}
\theoremstyle{remark}
\theoremstyle{definition}
\newtheorem{defn}[thm]{Definition}
\newtheorem{rmk}[thm]{Remark}
\newtheorem{exa}[thm]{Example}
\numberwithin{equation}{section}
\def\beq{\begin{equation}}
\def\eeq{\end{equation}}
\def\crash#1{}
\def\Z{{\mathbb Z}}
\def\Q{{\mathbb Q}}
\def\C{{\mathbb C}}
\def\P{{\mathbb P}}
\def\l{\left}
\def\r{\right}
\def\[[{\l[\l[}
\def\]]{\r]\r]}
\def\p{\prime}
\def\sgq{\sigma_q}
\def\Sgq{\Sigma_q}
\def\cf{\emph{cf. }}
\def\ie{\emph{i.e. }}
\def\ds{\displaystyle}
\def\cA{{\mathcal A}}
\def\cC{{\mathcal C}}
\def\cD{{\mathcal D}}
\def\cE{{\mathcal E}}
\def\cF{{\mathcal F}}
\def\cM{{\mathcal M}}
\def\cN{{\mathcal N}}
\def\cH{{\mathcal H}}
\def\cI{{\mathcal I}}
\def\cJ{{\mathcal J}}
\def\cL{{\mathcal{ L}}}
\def\cO{{\mathcal O}}
\def\cT{{\mathcal T}}
\def\cW{{\mathcal W}}
\def\wtilde{\widetilde}
\def\ul{\underline}
\def\ol{\overline}
\def\a{\alpha}
\def\be{\beta}
\def\de{\delta}
\def\ga{\gamma}
\def\Ga{\Gamma}
\def\la{\lambda}
\def\La{\Lambda}
\def\GL{{\rm GL}}
\def\cG{{\mathcal G}}
\def\Gal#1{{\cG al}^{alg} (#1)}
\def\Galt#1{\wtilde{{\cG al}^{alg} (#1)}}
\def\Kol#1{{\mathcal{K} ol}(#1)}
\def\Kolt#1{\wtilde{\mathop{\mathcal{K}ol}(#1)}}
\def\Zar#1{{\mathcal{Z} ar}(#1)}
\def\Zart#1{\wtilde{\mathop{\mathcal{Z}ar}(#1)}}
\def\Galan#1{{\mathcal{G}al}(#1)}
\def\J{{\mathbb J}}
\author{Lucia Di Vizio\thanks{Lucia DI VIZIO,
Laboratoire de Mathématiques UMR 8100, CNRS,
Universit\'e de Versailles-St Quentin,
45 avenue des États-Unis
78035 Versailles cedex, France.
{\tt divizio@math.cnrs.fr}}~
and
Charlotte Hardouin\thanks{Charlotte HARDOUIN, Institut de Math\'{e}matiques de Toulouse,
118 route de Narbonne,
31062 Toulouse Cedex 9, France.
{\tt hardouin@math.univ-toulouse.fr}}}
\date{followed by the appendix\\~\\
{\Large ``The Galois $D$-groupoid of a $q$-difference system''}\\~\\
by Anne Granier\thanks{Anne Granier, Institut de Math\'{e}matiques de Toulouse,
118 route de Narbonne,
31062 Toulouse Cedex 9, France.
{\tt anne.granier@gmail.com}}}
\title
{Parameterized generic Galois group for $q$-difference equations
\footnotetext{Date: \today}
\footnotetext{Work partially supported by ANR-06-JCJC-0028.}}
\begin{document}
\bibliographystyle{alpha}

\maketitle


\begin{abstract}
We introduce the parameterized generic Galois group of a $q$-difference module,
that is a differential group in the sense of Kolchin.
It is associated to
the smallest \emph{differential} tannakian category generated by the $q$-difference module, equipped with the forgetful functor.
The main result of \cite{diviziohardouinqGroth} leads to an adelic description of the parameterized
generic Galois group, in the spirit of the Grothendieck-Katz's conjecture on $p$-curvatures.
Using this description, we show that the Malgrange-Granier $D$-groupoid of a
nonlinear $q$-difference
system coincides, in the linear case, with the parameterized generic Galois group introduced here.
The paper is followed by an appendix by A. Granier, that provides a quick introduction to the $D$-groupoid of a
non-linear $q$-difference equation.
\par
The present paper is the natural continuation of \cite{diviziohardouinqGroth}. However it can be read independently.
\end{abstract}

\setcounter{tocdepth}{1}
\tableofcontents

\section*{Introduction}

In this paper, we combine the Grothendieck-Katz conjecture, proved in \cite{diviziohardouinqGroth},
with the parameterized Galois theory of difference equations
of Hardouin and Singer, to obtain an adelic approach to the study of the differential
algebraic relations satisfied by the solutions of a $q$-difference equation.
This allows us to prove that one can recover the Galois theory of linear
$q$-difference equations from the Malgrange-Granier $D$-groupoid, defined for nonlinear $q$-difference equations.
In the process, we build the first path between Kolchin's theory of linear differential groups
and Malgrange's $D$-groupoid, answering a question of B. Malgrange (see \cite[page 2]{Malgrangepseudogroupes}).
\par
In \cite{HardouinSinger}, the authors attach to a $q$-difference equation
a parameterized Galois group, which is a differential group \emph{\`a la Kolchin}. This
is a subgroup of the group of invertible matrices of a given order,
defined by a set of nonlinear algebraic differential equations.
The differential dimension of this Galois group measures
the hypertranscendence properties of a basis of solutions. We recall that
a function $f$ is hypertranscendental over a field $F$ equipped with a derivation $\partial$
if $F[\partial^n(f), n\geq 0]/F$ is a transcendental extension of infinite degree, or equivalently,
if $f$ is not a solution of a nonlinear algebraic differential equation with coefficients in
$F$. The question of hypertranscendence of solutions of functional equations appears in various mathematical domains: in special function theory (see for instance \cite{LiYezeta}, \cite{Mar} for the differential independence $\zeta$ and $\Gamma$ functions), in enumerative combinatorics  (see for instance \cite{BouPetwalk} for
problems of hypertranscendence and $D$-finiteness\footnote{A function $f$ is $D$-finite over a differential field $(\cF,\partial)$ if it is solution of a linear differential
equation with coefficients in $\cF$.} specifically related to $q$-difference equations\footnote{In \cite{BouPetwalk}, the authors consider
some formal power series generated by enumeration of random walks with constraints. Such generating series are solutions of $q$-difference equations: one natural step towards
their rationality is to establish whether they satisfy an algebraic (maybe nonlinear) differential equation. In fact, as proven by J.-P. Ramis, a formal power series
which is solution of a linear differential equation and a linear $q$-difference equation, both with coefficients in $\C(x)$, is necessarily rational (see \cite{RamisToulouse}).
Other examples of $q$-difference equations for which it would be interesting to establish hypertranscendency are given in \cite{Bousquetmelou95} and
\cite{Bousquetmelou96}.}),
\dots
The problem of the parameterized Galois group of Hardouin-Singer is that it is defined over the
differential closure of the elliptic functions over $C^*/q^\Z$, which is quite a big field.
\par
Here we introduce a parameterized generic Galois group attached to a $q$-difference
module $\cM=(M,\Sgq)$ over a field of rational functions $K(x)$, with coefficients in a field $K$ of characteristic $0$, in the spirit
of \cite{Katzbull}, \cite{diviziohardouin} and \cite{diviziohardouinqGroth}. This is a differential subgroup of $GL(M)$, defined over $K(x)$.
According to the nature of $K$ and $q$ we define a set of places $\cC$ of $K$ and a set of elements $\phi_v$ in $K$ so that we can prove
that \emph{the parameterized generic Galois group is the smallest differential subgroup of $GL(M)$
whose reduction modulo $\phi_v$ contains the $v$-curvatures modulo $\phi_v$, for almost all $v\in \cC$} (see \S\ref{subsec:car0} for precise definitions and statements).
\par
In this way, we have replaced the group introduced in \cite{HardouinSinger} by
another group defined over a smaller field, namely $K(x)$, and which admits an arithmetic characterization.
Notice that the differential generic Galois group is Zariski dense in the generic Galois group considered in \cite{diviziohardouinqGroth}.
The comparison among the parameterized generic Galois group and the parameterized Galois group of \cite{HardouinSinger} is studied in
\cite{diviziohardouinComp}, where we clarify the links between the many different Galois theories for $q$-difference equations in the literature.
\par
Malgrange has defined and studied Galois $D$-groupoids for nonlinear differential equations.
In the differential case the Galois $D$-groupoid has been shown to generalize the Galois group
in the sense of Kolchin-Picard-Vessiot by Malgrange himself (\cf \cite{MalgGGF}).
Roughly speaking the Galois $D$-groupoid is a groupoid of local diffeomorphisms of a variety
defined by a sheaf of differential ideals in the jet space of the variety.
This construction can be generalized to quite general dynamical systems. In the particular case of nonlinear $q$-difference equations
it has been done by A. Granier.
The idea of considering a groupoid defined by a differential structure is actually quite natural.
In fact, it encodes the ``linearizations'' of the dynamical system along its orbits, \ie the variational equations attached to the dynamical system, also called the linearized equations.
One of the two proofs (\cf \cite{casaleroques}) of the analog of Morales-Ramis theorem for $q$-difference equations, \ie the
connection between integrability of a nonlinear system and solvability of the Lie algebra of its Galois $D$-groupoid, was done
under the following conjecture:
``for linear (q-)difference systems, the action of Malgrange
groupoid on the fibers gives the classical Galois groups'' (\cf \cite[\S7.3]{casaleroques}).
\par
As an application of the characterization of the parameterized generic Galois group explained above,
we actually show that the $D$-groupoid of
a dynamical system associated with a nonlinear $q$-difference equation, as introduced by A. Granier,
generalizes the notion
of parameterized generic Galois group. In the particular case of linear $q$-difference systems with constant coefficients
we retrieve an algebraic Galois group and the result
obtained in \cite{GranierFourier}. In other words, for linear $q$-difference systems,
the Galois $D$-groupoid essentially coincides with the differential generic Galois group, so it contains
more informations than the Picard-Vessiot
Galois group. In fact, thanks to the comparison results in the \cite{diviziohardouinComp} (see also \cite{diviziohardouin}), we conclude that the
Galois $D$-groupoid of a linear $q$-difference system allows to recover both the classical, \ie Picard-Vessiot,
Galois group (\cf \cite{vdPutSingerDifference}, \cite{SauloyENS}) by taking
its Zariski closure and extending the base field conveniently, and the Hardouin-Singer parameterized Galois group
(\cf \cite{HardouinSinger}) only by extending the field. This proves a more refined version of the conjecture in \cite{casaleroques}.
It moreover gives a first answer to a question asked by Malgrange, see \cite[page 2]{Malgrangepseudogroupes}, on the link between
his $D$-groupoid and the Kolchin differential groups appearing in the parameterized Galois theory.

\subsubsection*{Acknowledgements.}
We would like to thank
D. Bertrand, Z. Djadli, C. Favre, M. Florence, A. Granier, D. Harari, F. Heiderich, A. Ovchinnikov, B. Malgrange, J-P. Ramis, J. Sauloy, M. Singer, J. Tapia, H. Umemura and
M. Vaquie for the many discussions on different points of this paper.
\par
We would like to thank the ANR projet Diophante that has made possible a few reciprocal visits,
and the Centre International de Rencontres
Math\'{e}matiques in Luminy for supporting us \emph{via} the Research in pairs program and
for providing a nice working atmosphere.


\section{Parameterized generic Galois groups}
\label{sec:genericgaloisgroup}

Let $K$ be a field of characteristic zero and $K(x)$ be the field of rational functions
in $x$ with coefficients in $K$.
The field $K(x)$ is naturally a $q$-difference algebra for any $q\in K\smallsetminus\{0,1\}$, \ie is equipped with the
operator
$$
\begin{array}{rccc}
\sgq:&K(x)&\longrightarrow&K(x)\\
&f(x)&\longmapsto&f(qx)
\end{array}.
$$
More generally, we will consider a field $K$, with a fixed element $q\neq 0,1$, and
an extension $\cF$ of $K(x)$ equipped with a $q$-difference operator, still called $\sgq$, extending
the action of $\sigma_q$.

\begin{defn}
A \emph{$q$-difference module over $\cF$ (of rank $\nu$)}
is an $\cF$-vector space $M_{\cF}$ (of finite dimension $\nu$)
equipped with an invertible $\sgq$-semilinear operator, \ie
$$
\Sgq(fm)=\sgq(f)\Sgq(m),
\hbox{~for any $f\in \cF$ and $m\in M_{\cF}$.}
$$
A \emph{morphism of $q$-difference modules over $\cF$} is a morphism
of $\cF$-vector spaces, commuting with the $q$-difference structures
(for more generalities on the topic, \cf \cite{vdPutSingerDifference},
\cite[Part I]{DVInv}
or \cite{gazette}).
We denote by $Diff(\cF, \sgq)$ the category of $q$-difference modules over $\cF$.
\end{defn}

Let $\cM_{\cF}=(M_{\cF},\Sgq)$ be a $q$-difference module over $\cF$ of rank $\nu$.
We fix a basis $\ul e$ of $M_{\cF}$ over $\cF$ and we set:
$$
\Sgq\ul e=\ul e A,
$$
with $A\in \GL_\nu(\cF)$.
A horizontal vector $\vec y\in \cF^\nu$ with respect to the basis $\ul e$ for
the operator $\Sgq$ is a vector
that verifies $\Sgq(\ul e\vec y)=\ul e\vec y$, \ie $\vec y=A\sgq(\vec y)$. Therefore we call
$$
\sgq(Y)=A_1Y,
\hbox{~with~}A_1=A^{-1},
$$
th\emph{e ($q$-difference) system associated to $\cM_{\cF}$ with respect to the basis $\ul e$}.
Recursively, we obtain a family of higher order $q$-difference systems:
$$
\sgq^n(Y)=A_n Y
\hbox{~with $A_n \in \GL_\nu(\cF)$.}
$$
Notice that $A_{n+1}= \sgq(A_n)A_1$.
It is convenient to set $A_0=1$.

\medskip
Let $\cF$ be a $q$-difference-differential field of zero characteristic,
that is, an extension of $K(x)$ equipped with an extension of the $q$-difference operator $\sgq$ and
a derivation $\partial$ commuting with $\sgq$ (\cf \cite[\S1.2]{hardouincompositio}).
For instance, the $q$-difference-differential field $(K(x),\sgq,x\frac{d}{dx})$ satisfies these assumptions.
\par
We can define an action of the derivation $\partial$ on the category
$Diff(\cF,\sgq)$, twisting the $q$-difference modules with the right $\cF$-module $\cF[\partial]_{\leq 1}$
of differential operators of order less or equal
than one.
We recall that the structure of right $\cF$-module on $\cF[\partial]_{\leq 1}$
is defined via the Leibniz rule, \ie
$$
\partial \lambda =\lambda \partial + \partial(\lambda),
\hbox{~for any $\la\in\cF$.}
$$
Let $V$ be an $\cF$-vector space. We denote by $F_\partial(V)$ the tensor product of
the right $\cF$-module $\cF[\partial]_{\leq 1}$ by the left $\cF$-module $V$:
$$
F_\partial(V):=\cF[\partial]_{\leq 1} \otimes_\cF V.
$$
We will write simply $v$ for $1\otimes v\in F_\partial(V)$ and $\partial(v)$ for
$\partial\otimes v\in F_\partial(V)$, so that $av+b\partial(v):=(a+b\partial)\otimes v$, for any $v\in V$ and
$a+b\partial\in \cF[\partial]_{\leq 1}$.
Similarly to the constructions of
\cite[Proposition 16]{grangmaiso} for $\cD$-modules, we endowe $F_\partial(V)$
with a left $\cF$-module structure such that if $\lambda \in\cF$:
$$
\lambda\partial(v)=\partial(\lambda v)-\partial(\lambda)v,
\ \mbox{for all} \, v \in V.
$$
This construction comes out of the Leibniz rule
$ \partial( \lambda v)= \lambda \partial(v)+\partial(\lambda)v$,
which justifies the notation introduced above.

\begin{defn}\label{defn:prolong}
The \emph{prolongation functor $F_\partial$} is defined on the category of $\cF$-vector spaces as follows.
It associates to any object $V$ the $\cF$-vector space $F_\partial(V)$. If $f:V\longrightarrow W$ is a
morphism of $\cF$-vector space then we define
$$
F_\partial(f):F_\partial(V)\rightarrow F_\partial(W),
$$
setting $F_\partial(f)(\partial^k(v))=\partial^k(f(v))$, for any $k=0,1$ and any $v\in V$.
\par
The prolongation functor $F_\partial$ restricts to a functor from the category $Diff(\cF,\sgq)$ to itself in the
following way:
\begin{enumerate}

\item
If $\cM_\cF:=(M_\cF, \Sgq)$ is an object of $Diff(\cF,\sgq)$ then
$F_\partial(\cM_\cF)$
is the $q$-difference module, whose underlying $\cF$-vector space is
$F_\partial(M_\cF)=\cF[\partial]_{\leq 1} \otimes M_\cF$, as above, equipped with the $q$-invertible
$\sgq$-semilinear operator defined by $\Sgq(\partial^k(m)):=\partial^k(\Sgq(m))$ for $k=0,1$.

\item If $f \in Hom(\cM_\cF,\cN_\cF)$ then $F_\partial(f)$ is defined in the same way as for
$\cF$-vector spaces.
\end{enumerate}
\end{defn}

\begin{rmk}\label{rmk:prolongationmatrix}
This formal definition comes from a simple and concrete idea.
Let $\cM_\cF$ be an object of $ Diff(\cF,\sgq)$. We fix a basis
$\ul e$ of $\cM_\cF$ over $\cF$ such that $\Sgq\ul e=\ul e A$.
Then $(\ul e,\partial(\ul e))$ is a basis of $F_\partial(M_\cF)$ and
$$
\Sgq (\ul e,\partial(\ul e))=(\ul e,\partial(\ul e))
\begin{pmatrix}A & \partial A \\0 & A\end{pmatrix}.
$$
In other terms, if
$\sgq(Y)=A^{-1}Y$ is a $q$-difference system associated to $\cM_\cF$
with respect to a fixed basis $\ul e$,
the $q$-difference system associated to $F_\partial(\cM_\cF)$ with respect to the basis
$\ul e,\partial(\ul e)$ is:
$$
\sgq(Z)=\begin{pmatrix}A^{-1} & \partial(A^{-1}) \\0 & A^{-1}\end{pmatrix}Z
=\begin{pmatrix}A & \partial A \\0 & A\end{pmatrix}^{-1}Z.
$$
If $Y$ is a solution of $\sgq(Y)=A^{-1}Y$ in some $q$-difference-differential extension of $\cF$ then
we have:
$$
\sgq\begin{pmatrix}\partial Y\\ Y\end{pmatrix}
=
\begin{pmatrix}A^{-1} & \partial(A^{-1}) \\0 & A^{-1}\end{pmatrix}
\begin{pmatrix}\partial Y\\ Y\end{pmatrix},
$$
in fact the commutation of $\sgq$ and $\partial$ implies:
$$
\sgq(\partial Y)=\partial(\sgq Y)=\partial(A^{-1}\,Y)=
A^{-1}\,\partial Y+\partial(A^{-1})\,Y.
$$
\end{rmk}

Let $V$ be a finite dimensional $\cF$-vector space. We denote by $Constr_{\cF}^{\partial} (V)$ the smallest family of
finite dimensional $\cF$-vector spaces containing $V$ and closed with respect to the constructions of linear algebra
(\ie direct sums, tensor product, symmetric and antisymmetric product, dual. See \cite[\S4]{diviziohardouinqGroth}) and the functor $F_\partial$.
We will say that an element $Constr_{\cF}^{\partial} (V)$ is a \emph{construction of differential linear algebra} of $V$.
By functoriality, the linear algebraic group $\GL(V)$ operates on $Constr_{\cF}^{\partial}(V)$.
For example $g \in \GL(V)$ acts on $F_\partial(V)$
through $g(\partial^k(v))=\partial^k(g(v)), k=0,1$.
\par
If we start with a $q$-difference module
$\cM_\cF=(M_\cF,\Sgq)$ over $\cF$, then every object of $Constr_{\cF}^{\partial}(M_\cF)$
has a natural structure of $q$-difference module (see also \cite[\S4]{diviziohardouinqGroth}).
We will denote $Constr_{\cF}^{\partial}(\cM_\cF)$
the family of $q$-difference modules obtained in this way.

\begin{defn} \label{defn:diffgengal}
We call \emph{parameterized generic Galois group}
of an object $\cM_\cF=(M_\cF,\Sgq)$ of $Diff(\cF, \sgq)$ the
group defined by
$$
\begin{array}{l}
\ds
Gal^{\partial}(\cM_\cF,\eta_{\cF}):=
\Big\{g\in \GL(M_{\cF}):
g(N)\subset N\,\hbox{~for all sub-$q$-difference module}\\
\ds
\hbox{ $(N,\Sgq)$ contained in an object of $Constr_{\cF}^{\partial}(\cM_\cF)$}\Big\}\subset \GL(M_{\cF}).
\end{array}
$$
\end{defn}

Following \cite[Definition page 3057]{DifftanOv}, we denote by $\langle\cM_\cF\rangle^{\otimes,\partial}$
the full abelian tensor subcategory of $Diff(\cF,\sgq)$ generated by $\cM_\cF$
and closed under the prolongation functor $F_\partial$.
Then $\langle\cM_\cF\rangle^{\otimes,\partial}$
is naturally a differential tannakian category, when
equipped with the forgetful functor
$$
\eta_\cF:\langle\cM_{K(x)}\rangle^{\otimes,\partial}\longrightarrow\{\hbox{$\cF$-vector spaces}\}.
$$
The functor $Aut^\otimes(\eta_\cF)$ defined over the category of
$\cF$-algebras is representable by the differential group (scheme) $Gal^\partial(\cM_\cF,\eta_\cF)$.

\medskip
For further reference, we recall (a particular case of) the Ritt-Raudenbush theorem (\cf \cite[Theorem 7.1]{Kapldiffalg}):

\begin{thm}\label{thm:rittraudenbush}
Let $(\cF,\partial)$ be a differential field of zero characteristic.
If $R$ is a finitely generated $\cF$-algebra equipped with a derivation $\partial$,
extending the derivation $\partial$ of $\cF$, then $R$ is $\partial$-noetherian.
\end{thm}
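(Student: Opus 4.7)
The plan is to reduce the statement to Ritt's classical basis theorem for the free differential polynomial algebra $\cF\{y_1,\dots,y_n\}$ on finitely many generators. For the reduction, pick $\cF$-algebra generators $r_1,\dots,r_n$ of $R$ and consider the unique differential $\cF$-algebra morphism $\pi:\cF\{y_1,\dots,y_n\}\to R$ sending $y_i$ to $r_i$. Since $r_1,\dots,r_n$ generate $R$ already as a plain $\cF$-algebra, $\pi$ is surjective, so $R$ is the quotient of $\cF\{y_1,\dots,y_n\}$ by a differential ideal. Pulling radical differential ideals back along $\pi$ gives an inclusion-preserving injection from radical differential ideals of $R$ into those of $\cF\{y_1,\dots,y_n\}$, so the $\partial$-noetherian property descends from the free algebra to $R$.

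For the free algebra the heart of the proof is the theory of characteristic sets. Fix a ranking on the set of derivatives $\{\partial^k y_i\}_{k\ge 0,\,1\le i\le n}$, that is, a well-ordering compatible with differentiation and with divisibility. Any nonzero differential polynomial $f$ then has a leader $u_f$ (the highest ranked derivative it contains), an initial $I_f$ (the leading coefficient of $f$ regarded as a polynomial in $u_f$) and a separant $S_f=\partial f/\partial u_f$. One calls $f$ reduced with respect to $g$ when no proper derivative of $u_g$ occurs in $f$ and the degree of $f$ in $u_g$ is strictly smaller than that of $g$, and a subset autoreduced when its elements are pairwise reduced. The combinatorial engine is Dickson's lemma: the divisibility quasi-ordering on the free commutative monoid generated by the derivatives $\partial^k y_i$ has no infinite antichain, so any autoreduced subset of $\cF\{y_1,\dots,y_n\}$ is finite.

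Now given a radical differential ideal $\mathfrak{p}$, the well-ordering of the ranking allows one to extract an autoreduced subset $\Lambda\subset\mathfrak{p}$ of minimal rank, a so-called characteristic set of $\mathfrak{p}$; it is finite by the preceding step. Ritt's reduction algorithm then shows that every $f\in\mathfrak{p}$, after multiplication by a suitable product $H^N$ of initials and separants of the elements of $\Lambda$, lies in the ordinary differential ideal $[\Lambda]$ generated by $\Lambda$. One concludes that $\mathfrak{p}$ equals the radical of the finitely generated differential ideal $[\Lambda]:H^\infty$, which gives ACC on radical differential ideals of $\cF\{y_1,\dots,y_n\}$ and hence of $R$.

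The delicate point, and to me the main obstacle, is this last identification: one must verify that the saturation of $[\Lambda]$ by $H^\infty$ really recovers all of $\mathfrak{p}$. This is where the characteristic-zero hypothesis becomes essential, as the separants of the elements of $\Lambda$ must be kept outside the minimal prime differential components of $\mathfrak{p}$, a property that ultimately rests on the fact that $\partial$ strictly lowers degree of a non-constant polynomial in characteristic zero.
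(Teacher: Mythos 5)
The paper does not prove this statement: it is quoted verbatim as a special case of the Ritt--Raudenbush basis theorem, with a reference to Kaplansky's book. So your proposal can only be measured against the classical proof found there.

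Your reduction to the free differential polynomial algebra $\cF\{y_1,\dots,y_n\}$ is correct, and rankings, autoreduced sets and characteristic sets are indeed the right machinery (for a single derivation you do not even need Dickson's lemma: an autoreduced set has at most one element whose leader is a derivative of a given $y_i$, hence at most $n$ elements). The genuine gap is the final step, which you yourself flag but then resolve incorrectly. For an arbitrary radical differential ideal $\mathfrak{p}$ with characteristic set $\Lambda$ and $H$ the product of the initials and separants, Ritt reduction does give $\mathfrak{p}\subseteq[\Lambda]:H^\infty$ and $[\Lambda]\subseteq\mathfrak{p}$; but the reverse containment $[\Lambda]:H^\infty\subseteq\mathfrak{p}$ is precisely what fails in general. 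The characteristic-set lemma only yields $H\notin\mathfrak{p}$, which does not prevent $H$ from lying in one of the minimal primes of $\mathfrak{p}$, and in that case the saturation strictly enlarges $\mathfrak{p}$; moreover $[\Lambda]:H^\infty$ is not visibly finitely generated as a differential ideal, so even the claimed conclusion would not directly give the ACC. The identity $\mathfrak{p}=[\Lambda]:H^\infty$ is a theorem about \emph{prime} differential ideals. The classical proof therefore does not argue ideal by ideal: one assumes the family of radical differential ideals admitting no finite basis is nonempty, extracts a maximal element by Zorn's lemma, proves that this maximal element is prime, and only then runs the characteristic-set argument to reach a contradiction. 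That primality step is where the characteristic-zero (Ritt algebra) hypothesis really enters --- through the lemmas that the radical of a differential ideal is again a differential ideal and that the radical differential ideal generated by $ab$ contains the product of those generated by $a$ and by $b$ --- and not through the degree-lowering property of $\partial$ you invoke (characteristic zero is also needed so that separants are nonzero, but that is a separate and easier point). Without the reduction to the prime case, your argument does not close.
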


This means that any ascending chain of \emph{radical} differential ideals (\ie radical $\partial$-stable ideals) is
stationary or equivalently that every radical $\partial$-ideal is $\partial$-finitely generated
(which in general does not mean that it is a finitely generated ideal).
Theorem \ref{thm:rittraudenbush} asserts that $\GL_\nu(\cF)$ is a $\partial$-noetherian differential variety in the sense
that its algebra of differential rational functions $\cF\l\{Y,\frac{1}{\det Y}\r\}_\partial$ is $\partial$-noetherian.
We recall that this is an algebra defined as follows.
We denote by $\cF\{Y\}_{\partial}$ the ring of differential
polynomials in the $\partial$-differential indeterminates
$Y=\{y_{i,j}:i,j=1,\dots,\nu\}$.
This means that $\cF \{Y\}_{\partial}$ is isomorphic as a differential $\cF$-algebra to a polynomial ring in infinite indeterminates
$\cF[\wtilde y_{i,j}^k;i,j=1,\dots,\nu,\, k\geq 0]$, equipped with a derivation $\partial$ extending the derivation of $\cF$
and such that $\partial \wtilde y_{i,j}^k=\wtilde y_{i,j}^{k+1}$, via the map
$$
\begin{array}{ccc}
\cF\{Y\}_\partial &\longrightarrow& \cF[\wtilde y_{i,j}^k;i,j=1,\dots,\nu,\, k\geq 0]\\
\partial^k(y_{i,j})&\longmapsto &\wtilde y_{i,j}^k
\end{array}.
$$
The differential Hopf-algebra $\cF\l\{ Y,\frac{1}{\det Y}\r\}_{\partial}$
of $\GL_{\nu}(\cF)$ is obtained from $\cF\{Y\}_{\partial}$ by inverting $\det Y$.

\begin{prop}
The group $Gal^{\partial}(\cM_\cF,\eta_{\cF})$ is a linear
differential $\cF$-subgroup of $\GL(M_{\cF})$.
\end{prop}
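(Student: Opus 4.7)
The plan is to fix a basis $\ul e$ of $M_\cF$, identifying $\GL(M_\cF)$ with $\GL_\nu(\cF)$, whose ring of differential regular functions is the $\partial$-noetherian Hopf algebra $\cF\l\{Y,\frac{1}{\det Y}\r\}_\partial$ recalled above. I then want to show that the defining conditions of $Gal^{\partial}(\cM_\cF,\eta_\cF)$ cut out a Kolchin-closed subgroup of $\GL_\nu(\cF)$.

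The key technical point is to check that for every object $V\in Constr_{\cF}^{\partial}(\cM_\cF)$, the tautological action of $g\in\GL(M_\cF)$ on $V$ is represented, with respect to a chosen basis, by a matrix whose entries lie in $\cF\l\{Y,\frac{1}{\det Y}\r\}_\partial$, i.e., are differential polynomials in the coordinates of $g$. For the pure tensorial operations (direct sums, tensor products, duals, symmetric and antisymmetric powers) this is classical and introduces no derivatives. For the prolongation functor $F_\partial$, Remark \ref{rmk:prolongationmatrix} shows that in the basis $(\ul f,\partial\ul f)$ the action of $g$ is given by the block matrix $\begin{pmatrix} g & \partial g \\ 0 & g\end{pmatrix}$, so that only the first derivatives of the entries of $g$ are introduced. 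Iteration of $F_\partial$ with the tensorial constructions therefore still produces matrices with entries in $\cF\l\{Y,\frac{1}{\det Y}\r\}_\partial$, each construction involving only finitely many derivatives.

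Granted this, for each sub-$q$-difference module $N$ of such a $V$ the condition $g(N)\subset N$ is the vanishing of finitely many explicit differential polynomials in $Y$, namely the entries of the matrix of $g$ on $V$ that are supported off a complement of $N$. Thus each stabilizer $\mathrm{Stab}(N)\subset\GL(M_\cF)$ is a Kolchin-closed subset. By definition $Gal^{\partial}(\cM_\cF,\eta_\cF)$ is the intersection of all such $\mathrm{Stab}(N)$ as $(V,N)$ ranges over sub-$q$-difference modules of constructions of $\cM_\cF$, so it is itself Kolchin-closed. The Ritt-Raudenbush theorem (Theorem \ref{thm:rittraudenbush}) even yields more: the radical $\partial$-ideal generated by the union of the defining ideals of the $\mathrm{Stab}(N)$ is $\partial$-finitely generated, so the intersection already coincides with a finite intersection of stabilizers and is defined by finitely many differential-polynomial equations.

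The group axioms are routine: the identity stabilizes every subspace; a product of two stabilizers of $N$ stabilizes $N$; and since each $V\in Constr_{\cF}^{\partial}(\cM_\cF)$ is finite-dimensional and $g$ acts invertibly on $V$ by functoriality, $g(N)\subset N$ forces $g(N)=N$, whence $g^{-1}(N)=N$. The main obstacle, to my mind, is verifying the inductive claim that iterating $F_\partial$ jointly with tensorial constructions preserves the property of being represented by differential polynomials in the entries of $Y$ and $1/\det Y$ (so that no more exotic expressions appear). This is however a clean bookkeeping computation, built from the block-triangular formula of Remark \ref{rmk:prolongationmatrix} together with the classical formulas for the action on tensorial constructions.
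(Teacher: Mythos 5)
Your proof is correct, and it reaches the same structural conclusion as the paper --- namely that $Gal^{\partial}(\cM_\cF,\eta_{\cF})$ is an intersection of stabilizers of subspaces, each of which is a closed differential subgroup --- but the execution is genuinely different. The paper works with the functor of points: it regards linear differential groups as representable functors on $\partial$-$\cF$-algebras (following Ovchinnikov) and simply asserts that the stabilizer functor $\ul{Stab}$ is representable, citing the differential analogue of \cite[II.1.36]{demazuregabriel}. You instead unpack what that citation hides: you fix a basis, verify by induction on the constructions that the action of $g$ on every object of $Constr_{\cF}^{\partial}(\cM_\cF)$ is given by matrices with entries in $\cF\l\{Y,\frac{1}{\det Y}\r\}_\partial$ (the block-triangular formula $\begin{pmatrix} g & \partial g\\ 0 & g\end{pmatrix}$ for $F_\partial$ being the only new ingredient beyond the classical tensorial case), and conclude that each stabilizer is Kolchin-closed, hence so is their intersection. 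Your approach buys an explicit description of the defining differential ideal and makes visible exactly where derivatives of the coordinates enter; the paper's approach buys brevity and a scheme-theoretic statement valid on all $\partial$-$\cF$-algebra points at once. One small caveat on your closure-under-inversion argument: the step ``$g(N)\subset N$ and $\dim N<\infty$ force $g(N)=N$'' is valid for points in a (differential) field, which suffices in the Kolchin-closed-subset framework you adopt; but if you wanted the functorial statement over an arbitrary $\partial$-$\cF$-algebra $A$, injectivity on the finitely generated module $N\otimes A$ does not imply surjectivity, and one should instead argue that a block upper triangular matrix invertible over $A$ has $\det$ of each diagonal block a unit, so its inverse is again block upper triangular. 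Your Ritt--Raudenbush remark is a correct bonus; the paper defers that finiteness observation to the corollary that follows.
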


\begin{proof}
Let $\cM_\cF=(M_\cF,\Sgq)$ be an object of $Diff(\cF,\sgq)$.
Following \cite[Section 2]{Tandifgrov}, we look at linear differential groups defined over $\cF$ as representable functors from the
category of $\partial$-$\cF$-differential algebras, \ie
commutative associative $\cF$-algebras $A$ with unit, equipped with a derivation
$\partial : A \rightarrow A$ extending the one of $\cF$, to the category of groups.
Now, the functor $\ul{Stab}$ that associates to a $\partial$-$\cF$-differential
algebra $A$, the stabilizer, inside $\GL(M_\cF)(A)$, of $N_\cF\otimes A$
for all sub-$q$-difference module $\cN_\cF=(N_\cF,\Sgq)$
contained in an object of $Constr_{\cF}^{\partial}(\cM_\cF)$, is representable by a
linear differential group, which is exactly $Gal^\partial(\cM_\cF,\eta_\cF)$.
It is the differential analogous of \cite[II.1.36]{demazuregabriel}.
\end{proof}




The noetherianity implies the following:

\begin{cor}
The parameterized generic Galois group $Gal^{\partial}(\cM_\cF,\eta_{\cF})$ can
be defined as the differential stabilizer of a line in a construction of differential algebra of $\cM_{\cF}$.
This line can be chosen so that it is also a $q$-difference module in the category $\langle\cM_\cF\rangle^{\otimes,\partial}$.
\end{cor}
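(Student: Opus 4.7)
The plan is to invoke a differential version of Chevalley's theorem. I first reduce the infinite intersection of stabilizers in Definition~\ref{defn:diffgengal} to the stabilizer of a single sub-$q$-difference module, using $\partial$-noetherianity, and then replace that stabilizer by the stabilizer of a line via the top exterior power.

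For the reduction step, if $N$ and $N'$ are two sub-$q$-difference modules of objects $T$ and $T'$ of $Constr_\cF^\partial(\cM_\cF)$, then $N\oplus N'$ is a sub-$q$-difference module of $T\oplus T'\in Constr_\cF^\partial(\cM_\cF)$, and its differential stabilizer in $\GL(M_\cF)$ equals the intersection of those of $N$ and of $N'$. Hence the family of finite intersections of stabilizers of sub-$q$-difference modules is directed, and so is the corresponding family of defining radical $\partial$-ideals inside the differential Hopf algebra of $\GL(M_\cF)$; the union of this directed family has radical equal to the radical $\partial$-ideal $I$ cutting out $Gal^\partial(\cM_\cF,\eta_\cF)$. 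By Theorem~\ref{thm:rittraudenbush}, $I$ is $\partial$-finitely generated; each of its $\partial$-generators lies in some member of the directed family, and by directedness they all lie in a single member. This produces a finite list $(N_i\subset T_i)_{i=1,\dots,r}$ such that $Gal^\partial(\cM_\cF,\eta_\cF)$ is the differential stabilizer of $N:=\bigoplus_i N_i\subset T:=\bigoplus_i T_i\in Constr_\cF^\partial(\cM_\cF)$.

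For the Chevalley step, set $d=\dim_\cF N$ and $L:=\Lambda^d N\subset \Lambda^d T$. Antisymmetric products are among the constructions of differential linear algebra, so $\Lambda^d T\in Constr_\cF^\partial(\cM_\cF)$; the line $L$ is a sub-$q$-difference module of $\Lambda^d T$, hence an object of $\langle\cM_\cF\rangle^{\otimes,\partial}$. Choosing any basis $v_1,\dots,v_d$ of $N$, an $A$-point $g$ of $\GL(M_\cF)$ preserves $L\otimes_\cF A$ if and only if $g(v_1)\wedge\cdots\wedge g(v_d)$ is an $A$-multiple of $v_1\wedge\cdots\wedge v_d$, which forces each $g(v_i)$ to lie in $N\otimes_\cF A$. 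Hence the differential stabilizer of $L$ coincides with that of $N$, and therefore with $Gal^\partial(\cM_\cF,\eta_\cF)$.

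The main obstacle is the reduction step: a directed union of radical $\partial$-ideals is not in general radical, so one must first pass to its radical, invoke Ritt--Raudenbush to obtain a finite $\partial$-generating set, and then exploit the directedness of the family to realize these generators inside a single of its members. Once this is done, the Chevalley trick is essentially formal, and the $q$-difference structure on $L$ is automatic since $L$ is, by construction, a sub-$q$-difference module of an object of $\langle\cM_\cF\rangle^{\otimes,\partial}$.
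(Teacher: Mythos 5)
Your proposal is correct and follows essentially the same route as the paper: a Ritt--Raudenbush noetherianity argument (phrased in the paper as stabilization of an ascending chain of radical differential ideals, in yours as finite $\partial$-generation of the radical of a directed union) reduces to finitely many sub-$q$-difference modules, after which the maximal exterior power of their direct sum yields the desired line. Your write-up is in fact more detailed than the paper's on the reduction step, but the underlying ideas coincide.
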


\begin{proof}
Let $\l\{\cW^{(i)};i\in I_h\r\}_h$ is an ascending chain of
finite sets of $q$-difference submodules contained in some elements of
$Constr^\partial(\cM_{K(x)})$ and let
$\cG_h$ be the differential subgroup of $\GL(M_\cF)$ defined as the stabilizer of $\{\cW^{(i)};i\in I_h\}$.
Then the ascending chain of the
radical differential ideals of the differential rational functions that annihilates
$\cG_h$ is stationary and so does the chain of differential groups $\cG_h$.
This proves that $Gal^{\partial}(\cM_\cF,\eta_{\cF})$ is the stabilizer of a finite number
of $q$-difference submodules $\cW^{(i)}$, $i\in I$,
contained in some elements of $Constr^\partial(\cM_{K(x)})$.
It follows from a standard noeterianity argument that
$Gal^{\partial}(\cM_\cF,\eta_{\cF})$ is the stabilizer of the maximal exterior
power of the direct sum of the $\cW^{(i)}$'s (see \cite[Remark 4.2]{diviziohardouinqGroth}).
\end{proof}

Let
$Gal(\cM_\cF,\eta_{\cF})$ be the generic Galois group defined in \cite[Definition 4.1]{diviziohardouinqGroth}.
We have the following inclusion, that we will characterize in a more
precise way in the next pages:

\begin{lemma}\label{lemma:inclgroup}
Let $\cM_\cF$ be an object of $Diff(\cF,\sgq)$.
The following inclusion of differential groups holds
$$Gal^{\partial}(\cM_\cF,\eta_{\cF}) \subset Gal(\cM_\cF,\eta_{\cF}).$$
\end{lemma}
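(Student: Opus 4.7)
The plan is essentially to unwind the definitions and observe that the parameterized construction is strictly more permissive than the non-parameterized one. Concretely, the category $\langle \cM_\cF\rangle^{\otimes,\partial}$ used to define $Gal^{\partial}(\cM_\cF,\eta_\cF)$ is obtained from $\cM_\cF$ by closing under the usual tensor constructions \emph{and} the prolongation functor $F_\partial$, while the category $\langle \cM_\cF\rangle^\otimes$ generating $Gal(\cM_\cF,\eta_\cF)$ only uses the usual tensor constructions. Hence
$$Constr_{\cF}(\cM_\cF)\subset Constr_{\cF}^{\partial}(\cM_\cF).$$

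First, I would view both groups as functors on the category of $\partial$-$\cF$-differential algebras, with $Gal(\cM_\cF,\eta_\cF)$ regarded as a differential algebraic group through the radical differential closure of its defining algebraic ideal (equivalently, every algebraic subgroup of $\GL(M_\cF)$ is tautologically a differential subgroup). This is needed merely so that the inclusion of the statement takes place in the common category of differential subgroups of $\GL(M_\cF)$.

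Next, for any $\partial$-$\cF$-algebra $A$ and any $g\in Gal^\partial(\cM_\cF,\eta_\cF)(A)$, the element $g$ stabilizes $N_\cF\otimes A$ for every sub-$q$-difference module $(N_\cF,\Sgq)$ contained in an element of $Constr^\partial_\cF(\cM_\cF)$. In particular, $g$ stabilizes $N_\cF\otimes A$ for every sub-$q$-difference module contained in an element of the smaller family $Constr_\cF(\cM_\cF)$, which by [diviziohardouinqGroth, Definition 4.1] is precisely what it means for $g$ to lie in $Gal(\cM_\cF,\eta_\cF)(A)$. This gives the claimed inclusion of functors, hence of differential subgroup schemes of $\GL(M_\cF)$.

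There is essentially no obstacle: the lemma is a formal consequence of the construction, included here to fix notation before the substantial work of characterizing the difference between the two groups (done in the subsequent sections via the action on the extra objects $F_\partial(\cM_\cF)$ and via the adelic/$v$-curvature description of $Gal^\partial$).
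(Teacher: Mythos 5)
Your argument is correct and is essentially the paper's own proof: both rest on the observation that the subobjects of constructions of linear algebra of $\cM_\cF$ form a subfamily of the subobjects of constructions of \emph{differential} linear algebra, so the stabilizer of the larger family is contained in the stabilizer of the smaller one. The extra care you take in phrasing this as an inclusion of functors on $\partial$-$\cF$-algebras (and in noting that an algebraic subgroup is tautologically a differential subgroup) is a harmless elaboration of what the paper leaves implicit.
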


\begin{rmk}
We would like to put the accent on the fact that differential groups are not algebraic groups,
while algebraic groups are differential groups (whose ``equations'' do not contain ``derivatives'').
In particular, the parameterized generic Galois group is not an algebraic subgroup of the generic Galois group
but only an differential subgroup. Later, for $\cF=K(x)$, we will prove that
$Gal^{\partial}(\cM_\cF,\eta_{\cF})$ is actually Zariski dense in $Gal(\cM_\cF,\eta_{\cF})$.
\end{rmk}

\begin{proof}
We recall, that the algebraic group $Gal(\cM_\cF,\eta_{\cF})$ is defined as the stabilizer in $\GL( M_{\cF})$
of all the subobjects contained in a construction of linear algebra of $\cM_\cF$ .
Because the list of subobjects contained in a construction of differential linear algebra of $\cM_\cF$ includes
those contained in a construction of linear algebra of $\cM_{\cF}$,
we get the claimed inclusion.
\end{proof}

\section{Characterization of the parameterized generic Galois group by curvatures}
\label{subsec:car0}

From now on we focus on the special case $\cF= K(x)$, where $K$ is a finitely generated extension of $\Q$.
The last assumption is not restrictive for the sequel, since we will always be able to reduce to this case.
We endow $K(x)$ with the derivation $\partial:= x \frac{d}{dx}$, that commutes with $\sgq$.
In this section, we are going to deduce a characterization of
$Gal^{\partial}(\cM_{K(x)},\eta_{K(x)})$ from the $q$-analogue of Grothendieck-Katz conjecture
(see \cite[\S5]{diviziohardouinqGroth}).

\medskip
Let $(\cM=(M_{K(x)},\Sgq)$ be a $q$-difference module over $K(x)$.
We need the following notation (\cf \cite[\S5]{diviziohardouinqGroth}):
\begin{trivlist}

\item $\bullet$
If \emph{$q$ is algebraic over $\Q$, but not a root of unity}, we are in the following situation.
We call $Q$ the algebraic closure of $\Q$ inside $K$,
$\cO_Q$ the ring of integer of $Q$, $\cC$ the set of finite places $v$ of $Q$ and $\pi_v$ a $v$-adic uniformizer.
For almost all $v\in\cC$ the following are well defined:
the order $\kappa_v$, as a root of unity, of the reduction of $q$ modulo $\pi_v$
and the positive integer power $\phi_v$ of
$\pi_v$, such that $\phi_v^{-1}(1-q^{\kappa_v})$
is a unit of $\cO_Q$.
The field $K$ has the form $Q(\ul a,b)$, where $\ul a=(a_1,\dots,a_r)$ is a transcendent basis
of $K/Q$ and $b$ is a primitive element of the algebraic extension $K/Q(\ul a)$.
Choosing conveniently the set of generators $\ul a,b$ and $P(x)\in \cO_Q\l[\ul a,b,x\r]$,
we can always find an algebra $\cA$ of the form
\beq
\cA=\cO_Q\l[\ul a,b,x,\frac{1}{P(x)},\frac{1}{P(qx)},...\r]
\eeq
and a $\Sgq$-stable $\cA$-lattice $M$ of $\cM_{K(x)}$, so that
we can consider the
$\cA/(\phi_v)$-linear operator
$$
\Sgq^{\kappa_v}:M\otimes_{\cA}\cA/(\phi_v)\longrightarrow M\otimes_{\cA}\cA/(\phi_v),
$$
that we will call the \emph{$v$-curvature of $\cM_{K(x)}$-modulo $\phi_v$}.
Notice that $\cO_Q/(\phi_v)$ is not an integral domain in general.

\item $\bullet$
If \emph{$q$ is transcendental over $\Q$}, then there exists a subfield $k$ of $K$ such that
$K$ is a finite extension of $k(q)$. We denote by
$\cC$ the set of places of $K$ that extend the places of $k(q)$, associated to
irreducible polynomials
$\phi_v$ of $k[q]$, that vanish at roots of unity.
Let $\kappa_v$ be the order of the roots of $\phi_v$, as roots of unity.
Let $\cO_K$ be the integral closure of $k[q]$ in $K$.
Choosing conveniently $P(x)\in \cO_K[x]$,
we can always find an algebra $\cA$ of the form:
\beq
\cA=\cO_K\l[x,\frac{1}{P(x)},\frac{1}{P(qx)},...\r]
\eeq
and a $\Sgq$-stable $\cA$-lattice $M$ of $\cM_{K(x)}$, so that
we can consider the
$\cA/(\phi_v)$-linear operator
$$
\Sgq^{\kappa_v}:M\otimes_{\cA}\cA/(\phi_v)\longrightarrow M\otimes_{\cA}\cA/(\phi_v),
$$
that we will call the \emph{$v$-curvature of $\cM_{K(x)}$-modulo $\phi_v$}.
Notice that, once again, $\cO_K/(\phi_v)$ is not an integral domain in general.

\item $\bullet$
If \emph{$q$ it a primitive root of unity of order $\kappa$},  we define $\cC$ to be the set containing
only the trivial valuation $v$ on $K$, $\phi_v=0$ and $\kappa_v=\kappa$. Then there exists a polynomial
$P(x)\in K[x]$ such the algebra $\cA=K\l[x,\frac{1}{P(x)},\frac{1}{P(qx)},...\r]$ is $\sgq$-stable and there exists
a $\Sgq$-stable $\cA$-lattice $M$ of $\cM_{K(x)}$, so that
we can consider the
$\cA/(\phi_v)$-linear operator
$$
\Sgq^{\kappa_v}:M\otimes_{\cA}\cA/(\phi_v)\longrightarrow M\otimes_{\cA}\cA/(\phi_v),
$$
that we will call the \emph{$v$-curvature of $\cM_{K(x)}$-modulo $\phi_v$}.
Notice that this is simple the $\kappa$-th iterate of $\Sgq$, namely $\Sgq^\kappa:M\longrightarrow M$.
\end{trivlist}

We recall that  $\cM_{K(x)}=(M_{K(x)},\Sgq)$ is trivial if and only if there exists a basis $\ul e$ of $M_{K(x)}$
over $K(x)$ such that $\Sgq\ul e=\ul e$. This is equivalent to ask that any $q$-difference systems associated
to $\cM_{K(x)}$ (with respect to any basis) has a fundamental solutions in $GL_\nu(K(x))$.
Then the main result of \cite{diviziohardouinqGroth} states:

\begin{thm}[{\cf \cite[Theorem 5.4]{diviziohardouinqGroth}}]\label{thm:GrothKatz}
A $q$-difference module $\cM_{K(x)}=(M_{K(x)},\Sgq)$ over $K(x)$ is trivial if and only if there exists
an algebra $\cA$, as above, and a $\Sgq$-stable $\cA$-lattice $M$ of $M_{K(x)}$ such that the map
$$
\Sgq^{\kappa_v}:M\otimes_{\cA}\cA/(\phi_v)\longrightarrow M\otimes_{\cA}\cA/(\phi_v),
$$
is the identity, for any $v$ in a cofinite nonempty subset of $\cC$.
\end{thm}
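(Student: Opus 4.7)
The plan is to establish the two implications separately, the forward direction being essentially formal and the backward direction constituting the genuinely deep $q$-analog of the Grothendieck-Katz conjecture.

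For the forward direction, assume $\cM_{K(x)}$ is trivial, so there exists a horizontal basis $\ul{e}'$ of $M_{K(x)}$ over $K(x)$. The change-of-basis matrix $Y$ from the reference basis $\ul{e}$ to $\ul{e}'$ lies in $\GL_\nu(K(x))$ and is a fundamental solution of the $q$-difference system associated to $\ul e$. The entries of $Y$ and $Y^{-1}$ are rational functions with finitely many poles, so after enlarging $P(x)$ to clear these denominators (the algebra $\cA$ is defined so that the entire $q$-orbit of poles is automatically inverted) and further localizing the coefficient ring at the finitely many places dividing $\det Y$ when $q$ is not a root of unity, one obtains an $\cA$-lattice $M' = Y \cdot M$ on which $\Sgq$ acts trivially. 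Hence $\Sgq^{\kappa_v}$ reduces to the identity on $M' \otimes_\cA \cA/(\phi_v)$ for every $v$ outside an explicit finite subset of $\cC$.

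For the backward direction, my approach would follow the Dwork-Katz strategy transported to the $q$-difference setting. The hypothesis amounts, in matrix form, to a congruence $A_{\kappa_v} \equiv \mathrm{Id} \pmod{\phi_v}$ holding in $\GL_\nu(\cA)$ for almost all $v$. I would proceed in three steps. First, reduce to a uniform framework by specialising the transcendence basis $\ul a$ of $K/Q$ and, in the transcendental-$q$ case, treating $q$ as a parameter so that the places $v \in \cC$ correspond to primitive roots of unity of unbounded order $\kappa_v$. Second, at each such $v$, use the congruence on $A_{\kappa_v}$ together with a $v$-adic iteration (building the higher-order matrices $A_n$ from $A_1$ and exploiting the $q$-adic contraction that stems from $q^{\kappa_v} \equiv 1 \pmod{\phi_v}$) to construct a fundamental solution $Y_v$ convergent in a suitable $q$-adic completion of $K(x)$. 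Third, apply a global transfer principle in the spirit of Dwork — a formal power series which is $v$-adically bounded with a fixed radius at almost every $v \in \cC$ is necessarily a rational function — in order to descend the family $\{Y_v\}_{v}$ to a single $Y \in \GL_\nu(K(x))$, which then witnesses the triviality of $\cM_{K(x)}$.

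The hard part is this last step: extracting global rationality from the collection of local solutions. The $q$-adic construction of the $Y_v$ in Step two is technical but, by analogy with Dwork's $p$-adic analysis, tractable; the real substance lies in the arithmetic descent, which must use in an essential way that $\cC$ is infinite and that the curvature hypothesis holds uniformly across a cofinite subset. A secondary technical difficulty is that $\cO_Q/(\phi_v)$ (respectively $\cO_K/(\phi_v)$) need not be an integral domain, so that every $v$-adic argument has to be phrased in terms of $\cA$-lattices and matrix congruences rather than in terms of residue-field vector spaces; one also has to verify that the statement is intrinsic, i.e., independent of the choices of $\cA$, $M$ and $P(x)$, which amounts to a careful but routine comparison of lattices.
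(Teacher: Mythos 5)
A point of order first: the paper does not actually prove this statement --- Theorem \ref{thm:GrothKatz} is imported verbatim from \cite[Theorem 5.4]{diviziohardouinqGroth}, whose proof occupies a substantial part of that companion paper. So there is no internal proof to compare against; your attempt has to be measured against that external argument. Your forward direction is fine and is the standard one: a fundamental solution $Y\in\GL_\nu(K(x))$ yields, after enlarging $P(x)$ and discarding the finitely many places where $Y$ or $Y^{-1}$ fails to be a unit, a $\Sgq$-fixed lattice, whence trivial curvatures almost everywhere.

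The backward direction, which is the entire content of the theorem, has a genuine gap. The ``global transfer principle'' you invoke --- that a formal power series which is $v$-adically bounded with a fixed radius at almost every finite $v\in\cC$ is rational --- is false as stated: any series with coefficients in $\cO_Q$, e.g.\ $\sum_n x^{2^n}$, has $|a_n|_v\leq 1$ at every finite place and is not rational. The rationality criteria that actually carry the proof in \cite{diviziohardouinqGroth} (following \cite{DVInv} in the number-field case) are of Borel--Dwork/B\'ezivin--Boutabaa type: they require estimates at \emph{all} places, archimedean ones included, together with a condition that a product of radii exceeds $1$. Supplying the archimedean input is itself a major step that your sketch omits: one must first deduce from the curvature hypothesis that the module is regular singular at $0$ and $\infty$ with exponents in $q^{\Z}$, so that a genuine solution with controlled growth at the archimedean places exists before any descent can be attempted. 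Moreover, in the transcendental-$q$ case the places of $\cC$ live over cyclotomic polynomials of $k[q]$ and the argument proceeds by specialization rather than by completing $K(x)$ $q$-adically, so your Step 2 does not transpose as written. In short, the skeleton points in the right direction, but the two decisive ingredients --- the archimedean/regular-singularity analysis and a correct statement of the adelic rationality criterion --- are missing, and the one precise descent statement you do give is incorrect.
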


\medskip
Notice that the algebra $\cA$ (in each case considered above) is stable under the action of the derivation $\partial$.
Let $\cM_{K(x)}=(M_{K(x)},\Sgq)$ be a $q$-difference module.
The differential version of Chevalley's theorem (\cf \cite[Proposition 14]{cassdiffgr}, \cite[Theorem 5.1]{ChevOv}) implies that
any closed differential subgroup $G$ of $\GL(M_{K(x)})$
can be defined as the stabilizer of some line
$L_{K(x)}$ contained in an object $\cW_{K(x)}$ of $\langle\cM_{K(x)}\rangle^{\otimes, \partial}$.
Because the derivation does not modify the set of poles of a rational function,
the lattice $\cM$ of $\cM_{K(x)}$ determines a $\Sgq$-stable $\cA$-lattice
of all the objects of $\langle\cM_{K(x)} \rangle^{\otimes, \partial}$.
In particular, the $\cA$-lattice $M$ of $M_{K(x)}$ determines an $\cA$-lattice $L$ of $L_{K(x)}$ and
an $\cA$-lattice $W$ of $W_{K(x)}$. The latter is the underlying space of a $q$-difference module
$\cW=(W,\Sgq)$ over $\cA$.

\begin{defn}
Let $\wtilde\cC$ be a nonempty cofinite subset of $\cC$ and $(\La_v)_{v\in\wtilde\cC}$
be a family of $\cA/(\phi_v)$-linear operators acting on $M\otimes_{\cA}\cA/(\phi_v)$.
We say that \emph{the differential group $G$ contains the operators $\La_v$ modulo $\phi_v$
for almost all $v\in\cC$}
if for almost all (and at least one) $v\in\wtilde\cC$ the operator $\La_v$ stabilizes
$L\otimes_{\cA}\cA/(\phi_v)$ inside $W\otimes_{\cA}\cA/(\phi_v)$:
$$
\La_v\in Stab_{\cA/(\phi_v)}(L\otimes_\cA\cA/(\phi_v)).
$$
\end{defn}

\begin{rmk}
The differential
Chevalley's theorem and the $\partial$-noetherianity of $\GL(M_{K(x)})$ imply that
the notion of a differential group containing the operators $\La_v$ modulo $\phi_v$ for almost all $v\in\cC$ and the
smallest closed differential subgroup of $\GL(M_{K(x)})$ containing
the operators $\La_v$ modulo $\phi_v$ for almost all $v\in\cC$ are well defined.
In particular they are independent of the choice of $\cA$, $\cM$ and $L_{K(x)}$
(See \cite[10.1.2]{DVInv}, \cite[Remark 4.4]{diviziohardouinqGroth}).
\end{rmk}

The main result of this section is the following:

\begin{thm}\label{thm:diffgenGalois}
The differential group $Gal^{\partial}(\cM_{K(x)},\eta_{K(x)})$ is the smallest closed differential subgroup of
$\GL(M_{K(x)})$ that contains the operators $\Sgq^{\kappa_v}$ modulo
$\phi_v$, for almost all $v\in\cC$.
\end{thm}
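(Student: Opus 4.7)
The plan is to establish the equality $Gal^{\partial}(\cM_{K(x)},\eta_{K(x)}) = H$, where $H$ denotes the smallest closed differential subgroup of $\GL(M_{K(x)})$ containing the operators $\Sgq^{\kappa_v}$ modulo $\phi_v$ for almost all $v\in\cC$, by proving the two inclusions separately.

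The inclusion $H\subset Gal^{\partial}(\cM_{K(x)},\eta_{K(x)})$ is the easier one. By the corollary following Definition \ref{defn:diffgengal}, the parameterized generic Galois group may be written as the differential stabilizer of a line $\cL_{K(x)}$ which is itself a sub-$q$-difference-module of some $\cW_{K(x)}\in\langle\cM_{K(x)}\rangle^{\otimes,\partial}$. Since $\cL_{K(x)}$ is $\Sgq$-stable, the induced $\cA$-lattice $L$ is too, and hence $\Sgq^{\kappa_v}$ stabilizes $L\otimes_\cA \cA/(\phi_v)$ for every $v\in\cC$. This is exactly the statement that $Gal^{\partial}(\cM_{K(x)},\eta_{K(x)})$ contains the $v$-curvatures modulo $\phi_v$, and the minimality of $H$ yields the inclusion.

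For the converse inclusion $Gal^{\partial}(\cM_{K(x)},\eta_{K(x)})\subset H$, the differential Chevalley theorem recalled just before the statement exhibits $H$ as the differential stabilizer of some line $\cL_{K(x)}\subset\cW_{K(x)}$ with $\cW_{K(x)}\in\langle\cM_{K(x)}\rangle^{\otimes,\partial}$. Since $Gal^{\partial}(\cM_{K(x)},\eta_{K(x)})$ stabilizes every sub-$q$-difference-module appearing inside a construction of differential linear algebra of $\cM_{K(x)}$, it will suffice to prove that $\cL_{K(x)}$ is actually a sub-$q$-difference-module of $\cW_{K(x)}$, i.e., that $\Sgq(\cL_{K(x)})=\cL_{K(x)}$. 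Letting $\cN_{K(x)}$ denote the smallest sub-$q$-difference-module of $\cW_{K(x)}$ containing $\cL_{K(x)}$, namely the $K(x)$-span of $\{\Sgq^n(\cL_{K(x)}):n\geq 0\}$, and setting $r=\dim_{K(x)}\cN_{K(x)}$, one has that $\cL_{K(x)}$ is $\Sgq$-stable if and only if $r=1$. The strategy is to associate an auxiliary $q$-difference module $\cE_{K(x)}\in\langle\cM_{K(x)}\rangle^{\otimes,\partial}$, built from $\cN_{K(x)}$, $\cL_{K(x)}$ and Pl\"ucker/exterior-algebra constructions inside $\wedge^{*}\cN_{K(x)}$, such that the hypothesis that $\Sgq^{\kappa_v}$ stabilizes $L\otimes_\cA\cA/(\phi_v)$ for almost all $v$ translates into the triviality of the $v$-curvatures of $\cE_{K(x)}$ modulo $\phi_v$, and such that the triviality of $\cE_{K(x)}$ in the sense of Theorem \ref{thm:GrothKatz} forces $r=1$. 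Applying Theorem \ref{thm:GrothKatz} to $\cE_{K(x)}$ then yields the desired $\Sgq$-invariance of $\cL_{K(x)}$.

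The hard part will be the explicit construction of $\cE_{K(x)}$: it must lie in $\langle\cM_{K(x)}\rangle^{\otimes,\partial}$ so that the $\cA$-lattice and the $v$-curvature reductions are available on it; the hypothesis on $\Sgq^{\kappa_v}$ modulo $\phi_v$ must imply the triviality of its $v$-curvatures modulo $\phi_v$; and its triviality must in turn faithfully encode the $\Sgq$-invariance of $\cL_{K(x)}$. Once such $\cE_{K(x)}$ is pinned down, the rest of the argument is a formal differential-linear-algebra analogue of the non-parameterized characterization of the generic Galois group proved in \cite{diviziohardouinqGroth}, with the classical Chevalley theorem replaced by its differential counterpart.
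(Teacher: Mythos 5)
Your first inclusion is correct and is exactly the content of Lemma \ref{lemma:diffgenGalois} combined with the minimality of $H$. The second inclusion, however, contains a genuine gap, and the intermediate statement you reduce to is actually false. You propose to show that the Chevalley line $\cL_{K(x)}$ with $H=Stab(\cL_{K(x)})$ is necessarily $\Sgq$-stable (\ie that $r=1$), \emph{via} an auxiliary module $\cE_{K(x)}$ whose construction you explicitly defer as ``the hard part''. No such $\cE_{K(x)}$ can exist, because $r=1$ fails in general: take $\cM_{K(x)}$ to be the trivial rank-one module $(K(x)v,\Sgq)$ with $\Sgq v=v$, let $W=V\oplus V^{\otimes 2}$ and $L=K(x)\cdot(xv+v\otimes v)$. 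Then $Stab(L)=\{1\}$ inside $\GL(V)=\mathbb{G}_{m,K(x)}$ (the conditions $tx=ax$, $t^2=a$ force $t=1$), the curvatures are the identity and stabilize $L$ modulo every $\phi_v$, so $H=\{1\}=Stab(L)$ is a perfectly legitimate output of the differential Chevalley theorem; yet $\Sgq(xv+v\otimes v)=qxv+v\otimes v\notin L$ and the minimal sub-$q$-difference module containing $L$ is all of $W$, so $r=2$. What is true, and what one must prove, is only the weaker statement that $Gal^{\partial}(\cM_{K(x)},\eta_{K(x)})$ \emph{stabilizes} $L$ (here it does, trivially, since the group is $\{1\}$). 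Since your plan insists on deducing $r=1$, it cannot be completed for any choice of $\cE_{K(x)}$.

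The route the paper actually takes is a citation: the two lemmas of this section (your easy inclusion, plus ``$Gal^{\partial}=\{1\}$ iff $\cM_{K(x)}$ is trivial'') together with the differential Chevalley theorem, fed into the argument of \cite[Theorem 4.5]{diviziohardouinqGroth}. The Katz-style step that replaces your claim $r=1$ is the following. Let $u$ generate $L$ and let $\cN$ be the minimal sub-$q$-difference module containing $u$, of rank $r$, so that $u,\Sgq(u),\dots,\Sgq^{r-1}(u)$ is generically a basis. The hypothesis $\Sgq^{\kappa_v}(u)\equiv c_vu$ modulo $\phi_v$ propagates to $\Sgq^{\kappa_v}(\Sgq^j u)\equiv\sgq^j(c_v)\,\Sgq^j(u)$, so the curvatures fix the vector $\omega^\vee\otimes u\otimes\Sgq(u)\otimes\dots\otimes\Sgq^{r-1}(u)$ in $(\Lambda^r\cN)^\vee\otimes\cN^{\otimes r}$, where $\omega=u\wedge\dots\wedge\Sgq^{r-1}(u)$; the sub-$q$-difference module this vector generates has identity curvatures, hence is trivial by Theorem \ref{thm:GrothKatz}, hence has trivial parameterized generic Galois group. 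Since $Gal^{\partial}(\cM_{K(x)},\eta_{K(x)})$ stabilizes the rank-one submodule $\Lambda^r\cN$, it must multiply the nonzero decomposable tensor $u\otimes\dots\otimes\Sgq^{r-1}(u)$ by a scalar, and the uniqueness up to scalars of the factors of a decomposable tensor yields $g(u)\in K(x)u$, \ie $Gal^{\partial}(\cM_{K(x)},\eta_{K(x)})\subset Stab(L)=H$. This is the argument your proposal would need in place of the (unachievable) reduction to $r=1$.
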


\begin{proof}
The lemmas below plus the differential Chevalley theorem
allow to prove Theorem \ref{thm:diffgenGalois} in exactly the same way as
\cite[Theorem 4.5]{diviziohardouinqGroth}.
\end{proof}

\begin{lemma}\label{lemma:diffgenGalois}
The differential group $Gal^{\partial}(\cM_{K(x)},\eta_{K(x)})$ contains the operators
$\Sgq^{\kappa_v}$ modulo $\phi_v$ for almost all $v\in\cC$.
\end{lemma}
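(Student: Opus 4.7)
The plan is to reduce the statement to the tautology that powers of $\Sgq$ preserve any sub-$q$-difference module of any construction, and then transfer this to the mod-$\phi_v$ reduction via the canonical integral model. By the Corollary immediately following Definition \ref{defn:diffgengal}, $Gal^{\partial}(\cM_{K(x)},\eta_{K(x)})$ is the differential stabilizer inside $\GL(M_{K(x)})$ of some line $L_{K(x)}$ contained in an object $\cW_{K(x)}\in\langle\cM_{K(x)}\rangle^{\otimes,\partial}$, and, crucially, $L_{K(x)}$ can be chosen to be itself a $q$-difference submodule of $\cW_{K(x)}$. In particular $\Sgq(L_{K(x)})\subseteq L_{K(x)}$, and hence $\Sgq^{\kappa_v}(L_{K(x)})\subseteq L_{K(x)}$ for every $v$.

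Next, I would descend to the integral picture. As recalled in the paragraph preceding Theorem \ref{thm:GrothKatz}, the $\Sgq$-stable $\cA$-lattice $M$ of $\cM_{K(x)}$ determines, functorially through the tensor, duality and $F_\partial$ constructions, $\Sgq$-stable $\cA$-lattices $L\subseteq W$ of $L_{K(x)}\subset W_{K(x)}$. Iterating $\Sgq$ a total of $\kappa_v$ times on $L$ and then reducing modulo $\phi_v$ therefore yields
$$
\Sgq^{\kappa_v}\bigl(L\otimes_\cA\cA/(\phi_v)\bigr)\subseteq L\otimes_\cA\cA/(\phi_v)\subseteq W\otimes_\cA\cA/(\phi_v),
$$
which is precisely the stability condition $\Sgq^{\kappa_v}\in Stab_{\cA/(\phi_v)}(L\otimes_\cA\cA/(\phi_v))$ demanded by the definition preceding the statement of the lemma. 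The containment holds for every place $v$ in the cofinite subset $\wtilde\cC\subseteq\cC$ on which $\cA$, $\kappa_v$ and $\phi_v$ are simultaneously defined, and by the Remark after that definition the resulting property of $Gal^{\partial}$ is independent of the auxiliary choices of $\cA$, of the lattice $M$, and of the representative line $L_{K(x)}$.

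I do not foresee any serious obstacle. This lemma is the \emph{easy half} of the curvature characterization of $Gal^{\partial}$: the opposite inclusion --- that any differential subgroup of $\GL(M_{K(x)})$ containing the $\Sgq^{\kappa_v}$ mod $\phi_v$ already contains $Gal^{\partial}$ --- is what forces the use of Theorem \ref{thm:GrothKatz} in the proof of Theorem \ref{thm:diffgenGalois}. The only point requiring a little attention is that one must reduce the \emph{canonical} lattice $L$ obtained from $M$ through the tensor and $F_\partial$ constructions, and not an arbitrary lattice, because it is exactly the functoriality of those constructions that guarantees the $\Sgq$-stability of $L$ on which the mod-$\phi_v$ argument rests.
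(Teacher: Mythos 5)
Your proof is correct and follows essentially the same route as the paper: the key point in both is that $Gal^{\partial}(\cM_{K(x)},\eta_{K(x)})$ can be realized as the stabilizer of a line that is itself a sub-$q$-difference module, so that line (and hence its canonical $\cA$-lattice, reduced modulo $\phi_v$) is \emph{a fortiori} stable under $\Sgq^{\kappa_v}$. The paper states this in one sentence; your version merely makes the lattice-reduction step explicit.
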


\begin{proof}
The statement follows immediately from the fact that $Gal^{\partial}(\cM_{K(x)},\eta_{K(x)})$
can be defined as the stabilizer of one rank one $q$-difference module
in $\langle\cM_{K(x)}\rangle^{\otimes,\partial}$, which is \emph{a fortiori} stable under the action
of $\Sgq^{\kappa_v}$.
\end{proof}

\begin{lemma}
$Gal^{\partial}(\cM_{K(x)},\eta_{K(x)})=\{1\}$ if and only if
$\cM_{K(x)}$ is a trivial $q$-difference module.
\end{lemma}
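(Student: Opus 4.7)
The plan is to deduce both implications from Theorem \ref{thm:diffgenGalois} combined with the $q$-analogue of Grothendieck--Katz (Theorem \ref{thm:GrothKatz}), going through the intermediate equivalence that $Gal^\partial(\cM_{K(x)},\eta_{K(x)})=\{1\}$ iff all $v$-curvatures $\Sgq^{\kappa_v}$ reduce to the identity modulo $\phi_v$ for almost all $v\in\cC$.

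For the ``if'' direction, triviality of $\cM_{K(x)}$ yields a basis $\ul e$ with $\Sgq \ul e = \ul e$; after shrinking $\cA$ so the $e_i$ become integral, the $\Sgq$-stable $\cA$-lattice $M = \bigoplus_i \cA e_i$ carries $\Sgq$ as the identity, and in particular $\Sgq^{\kappa_v}\equiv\mathrm{Id}$ on $M\otimes_\cA\cA/(\phi_v)$ for every $v\in\cC$. The smallest closed differential subgroup of $\GL(M_{K(x)})$ containing only identity operators is obviously $\{1\}$, so Theorem \ref{thm:diffgenGalois} forces $Gal^\partial(\cM_{K(x)},\eta_{K(x)})=\{1\}$.

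For the ``only if'' direction, if $Gal^\partial(\cM_{K(x)},\eta_{K(x)})=\{1\}$, then Theorem \ref{thm:diffgenGalois} says that $\{1\}$ is the smallest closed differential subgroup of $\GL(M_{K(x)})$ containing $\Sgq^{\kappa_v}$ modulo $\phi_v$ for almost all $v\in\cC$. Unpacking the definition of containment -- which by the remark preceding Theorem \ref{thm:diffgenGalois} is independent of the choice of defining line and construction -- forces $\Sgq^{\kappa_v}\equiv\mathrm{Id}$ modulo $\phi_v$ for almost all $v$, and Theorem \ref{thm:GrothKatz} then concludes that $\cM_{K(x)}$ is trivial.

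The main delicate point is the unwinding in the ``only if'' direction, namely deducing the pointwise equality $\Sgq^{\kappa_v}\equiv\mathrm{Id}$ modulo $\phi_v$ from the abstract statement that $\{1\}$ contains $\Sgq^{\kappa_v}$ modulo $\phi_v$. One uses the corollary following Definition \ref{defn:diffgengal} to represent $\{1\}$ as the differential stabilizer of a faithful line in some construction of $\cM_{K(x)}$, and then verifies that this faithfulness is preserved on reducing modulo $\phi_v$ for almost all $v$, so that an operator stabilizing the reduced line must already be the identity. Once this is in place, everything else is a direct corollary of the two cited theorems.
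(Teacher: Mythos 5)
Your proposal is circular as the paper is structured. The statement you are proving is itself one of the ingredients of the proof of Theorem \ref{thm:diffgenGalois}: the paper's proof of that theorem reads ``the lemmas below plus the differential Chevalley theorem allow to prove Theorem \ref{thm:diffgenGalois}'', and ``the lemmas below'' are precisely Lemma \ref{lemma:diffgenGalois} together with the present lemma. This is the usual Katz-type scheme: Lemma \ref{lemma:diffgenGalois} gives that the smallest differential group $H$ containing the curvatures is contained in $Gal^{\partial}(\cM_{K(x)},\eta_{K(x)})$, while the reverse inclusion is obtained by applying the present lemma, together with Theorem \ref{thm:GrothKatz}, to an auxiliary object attached to the line whose stabilizer is $H$. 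You therefore cannot invoke Theorem \ref{thm:diffgenGalois} to prove this lemma. The paper's own proof goes a genuinely different route: it is a purely categorical argument, transposed verbatim from the algebraic case \cite[Corollary 4.8]{diviziohardouinqGroth} by replacing $\langle\cM_{K(x)}\rangle^{\otimes}$ with $\langle\cM_{K(x)}\rangle^{\otimes,\partial}$; triviality of the group is equivalent to the triviality of every object of the (differential) tannakian category generated by $\cM_{K(x)}$, and no arithmetic input (curvatures, places $v$) enters at all.

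Even if one granted Theorem \ref{thm:diffgenGalois}, the step you yourself flag as ``the main delicate point'' is a genuine gap rather than a routine verification. Knowing that $\{1\}$ is the stabilizer of a line $L_{K(x)}$ inside a construction $\cW_{K(x)}$ does not yield, without further work, that the stabilizer of $L\otimes_{\cA}\cA/(\phi_v)$ inside $W\otimes_{\cA}\cA/(\phi_v)$ is trivial for almost all $v$: stabilizers need not commute with reduction, $\cA/(\phi_v)$ is in general not even an integral domain, and $\Sgq^{\kappa_v}$ is merely an $\cA/(\phi_v)$-linear (indeed only semilinear over $x$) operator rather than a point of a reduction of the group, so ``stabilizes the reduced line'' does not formally force ``equals the identity''. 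Closing this spreading-out gap is essentially as much work as the tannakian argument the paper actually uses, so the proposal defers the real content of both directions to statements it does not establish.
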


\begin{proof}
The proof is analogous to the proof of \cite[Corollary 4.8]{diviziohardouinqGroth}.
It suffices to replace $\langle\cM_{K(x)}\rangle^{\otimes}$ with $\langle\cM_{K(x)}\rangle^{\otimes,\partial}$.
\end{proof}

We obtain the following:

\begin{cor}\label{cor:diffdens}
The parameterized generic Galois group $Gal^{\partial}(\cM_{K(x)},\eta_{K(x)})$ is a Zariski dense subset of
the algebraic generic Galois group $Gal(\cM_{K(x)},\eta_{K(x)})$.
\end{cor}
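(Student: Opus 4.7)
The plan is to combine the characterization of $Gal^{\partial}(\cM_{K(x)},\eta_{K(x)})$ provided by Theorem~\ref{thm:diffgenGalois} with its algebraic counterpart \cite[Theorem 4.5]{diviziohardouinqGroth}, which identifies $Gal(\cM_{K(x)},\eta_{K(x)})$ as the smallest \emph{algebraic} subgroup of $\GL(M_{K(x)})$ containing the $v$-curvatures $\Sgq^{\kappa_v}$ modulo $\phi_v$ for almost all $v\in\cC$. The idea is then to play the Zariski closure of $Gal^{\partial}(\cM_{K(x)},\eta_{K(x)})$ against both minimality statements.

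More precisely, I would let $H$ denote the Zariski closure of $Gal^{\partial}(\cM_{K(x)},\eta_{K(x)})$ inside $\GL(M_{K(x)})$. The inclusion $H\subset Gal(\cM_{K(x)},\eta_{K(x)})$ is immediate from Lemma~\ref{lemma:inclgroup} together with the fact that $Gal(\cM_{K(x)},\eta_{K(x)})$ is Zariski closed. For the reverse inclusion, by Lemma~\ref{lemma:diffgenGalois} there exists a rank-one $q$-difference submodule $L_{K(x)}$ of some $W_{K(x)}\in\langle\cM_{K(x)}\rangle^{\otimes,\partial}$ such that $Gal^{\partial}(\cM_{K(x)},\eta_{K(x)})$ is the stabilizer of $L_{K(x)}$ and, for almost all $v\in\cC$, $\Sgq^{\kappa_v}$ stabilizes the reduction $L\otimes_{\cA}\cA/(\phi_v)$. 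Since the induced action of $\GL(M_{K(x)})$ on every object of $Constr_{K(x)}^{\partial}(M_{K(x)})$ is algebraic---the prolongation functor doubles the underlying representation without introducing any derivative of the group element---stabilizing $L_{K(x)}$ is a Zariski closed condition in $\GL(M_{K(x)})$; it therefore descends from $Gal^{\partial}(\cM_{K(x)},\eta_{K(x)})$ to $H$. Hence $H$ is an algebraic subgroup of $\GL(M_{K(x)})$ containing the $v$-curvatures modulo $\phi_v$ for almost all $v$, and the minimality statement in \cite[Theorem 4.5]{diviziohardouinqGroth} forces $Gal(\cM_{K(x)},\eta_{K(x)})\subset H$, so that $H=Gal(\cM_{K(x)},\eta_{K(x)})$.

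The main technical point, and essentially the only step where some care is required, is bridging the differential and algebraic versions of the phrase \emph{contains the $v$-curvatures}: a priori the former is tested against a witness line in a differential construction of linear algebra, while the latter is tested against a line in an ordinary construction. The reconciliation is provided by the choice-independence recalled in the remark following Theorem~\ref{thm:GrothKatz}, which makes the property intrinsic to the group as soon as it stabilizes any admissible witness line; once $H$ is known to be algebraic and to inherit the differential witness line from $Gal^{\partial}(\cM_{K(x)},\eta_{K(x)})$, the remainder of the argument is purely formal.
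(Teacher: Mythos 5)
Your overall strategy---playing the minimality characterization of Theorem \ref{thm:diffgenGalois} against its algebraic counterpart in \cite{diviziohardouinqGroth} via the Zariski closure $H$ of $Gal^{\partial}(\cM_{K(x)},\eta_{K(x)})$---is the same as the paper's, and the inclusion $H\subset Gal(\cM_{K(x)},\eta_{K(x)})$ is fine. The gap is in the step transferring the property ``contains the $v$-curvatures'' from $Gal^{\partial}$ to $H$. You justify it by claiming that the action of $\GL(M_{K(x)})$ on the objects of $Constr^{\partial}_{K(x)}(M_{K(x)})$ is algebraic, ``without introducing any derivative of the group element'', so that stabilizing $L_{K(x)}$ is a Zariski closed condition. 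This is false: if $g$ acts on $M_{K(x)}$ by a matrix $B$ in a basis $\ul e$, it acts on $F_\partial(M_{K(x)})$ in the basis $(\ul e,\partial(\ul e))$ by $\begin{pmatrix}B&\partial B\\ 0&B\end{pmatrix}$---this is precisely why, in the proof of Theorem \ref{thm:clotkol}, the coefficients of $constr^{\partial}(\beta)$ are said to lie in $\C(x)[\beta,\partial(\beta),\dots]$. The stabilizer of a line in a differential construction is therefore cut out by \emph{differential} polynomial equations: it is Kolchin closed but in general not Zariski closed. Indeed, if your claim were correct, $Gal^{\partial}$ would itself be Zariski closed and the corollary would assert the equality $Gal^{\partial}=Gal$, which the Theta-function example contradicts (Proposition \ref{prop:Thetagroups}: there $Gal^{\partial}$ is the proper, non-Zariski-closed subgroup of $\mathbb G_m$ defined by $\partial(\partial y/y)=0$, while the generic Galois group is all of $\mathbb G_m$).

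The step can be repaired, but by a different mechanism. Since $H$ is an algebraic group, the \emph{algebraic} Chevalley theorem provides a witness line $L'$ in an ordinary construction of linear algebra with $H=Stab(L')$. As $Gal^{\partial}\subset H$ stabilizes $L'$, the differential tannakian formalism forces $L'$ to be a sub-$q$-difference module, hence stable under $\Sgq$, hence under $\Sgq^{\kappa_v}$ modulo $\phi_v$ for almost all $v$; so $H$ contains the curvatures in the sense required by the algebraic minimality statement, and $Gal(\cM_{K(x)},\eta_{K(x)})\subset H$ follows. Note also that the choice-independence remark you invoke concerns the various witness lines of a \emph{fixed} group defined as their stabilizer; since $H\neq Stab(L_{K(x)})$ in general (the latter equals $Gal^{\partial}$, which may be strictly smaller), $L_{K(x)}$ is not an admissible witness for $H$ and that remark cannot perform the reconciliation you ask of it.
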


\begin{proof}
We have seen in Lemma \ref{lemma:inclgroup} that $Gal^{\partial}(\cM_{K(x)},\eta_{K(x)})$ is a subgroup
of $Gal(\cM_{K(x)},\eta_{K(x)})$. By Theorem \ref{thm:diffgenGalois} (resp. \cite[Theorem 4.8]{diviziohardouinqGroth})
we have that
the generic Galois group $Gal^\partial(\cM_{K(x)},\eta_{K(x)})$ (resp. $Gal(\cM_{K(x)},\eta_{K(x)})$)
is the smallest closed differential subgroup (resp. closed algebraic group) of
$\GL(M_{K(x)})$ that contains the operators $\Sgq^{\kappa_v}$ modulo
$\phi_v$, for almost all $v\in\cC$. This immediately implies the Zariski density.
\end{proof}

\begin{exa}
The logarithm is solution both a $q$-difference and a differential system:
$$
Y(qx)=\begin{pmatrix}
1&\log q\\ 0&1
\end{pmatrix}Y(x),
\hskip 15 pt
\partial Y(x)=\begin{pmatrix}
0&1\\ 0&0
\end{pmatrix}Y(x).
$$
It is easy to verify that the two systems are integrable in the sense that
$\partial\sgq Y(x)=\sgq\partial Y(x)$ (and therefore that the induced condition on the matrices of the systems is verified).
\par
By iterating the $q$-difference system for any $n\in\Z_{>0}$ we obtain:
$$
Y(q^nx)=\begin{pmatrix}
1&n\log q\\ 0&1
\end{pmatrix}Y(x).
$$
This implies that the parameterized generic Galois group is the subgroup of $\mathbb G_{a,K(x)}$
defined by the equation $\partial y=0$. This coincides with the group $\mathbb G_{a,K}$, compatibly
with the integrability criteria in \cite{HardouinSinger}.
For more precise comparison results with the theory developed in \cite{HardouinSinger}, we refer
to \cite{diviziohardouinComp}.
\end{exa}

We conclude with some remarks on complex $q$-difference modules.
 Let
$\cM_{\C(x)}=(M_{\C(x)},\Sgq)$ be a $q$-difference module over
$\C(x)$. We can consider a finitely generated extension of $K$ of
$\Q$ such that there exists a $q$-difference module
$\cM_{K(x)}=(M_{K(x)},\Sgq)$ satisfying
$\cM_{\C(x)}=\cM_{K(x)}\otimes_{K(x)}\C(x)$. We can of course
define, as above, two parameterized generic Galois groups,
$Gal^\partial(\cM_{K(x)},\eta_{K(x)})$ and
$Gal^\partial(\cM_{\C(x)},\eta_{\C(x)})$.
A (differential)
noetherianity argument, that we have already used several times, on
the submodules stabilized by those groups shows the following:

\begin{prop}\label{prop:finitegenextension}
In the notation above we have:
$$
Gal^\partial(\cM_{\C(x)},\eta_{\C(x)})
\subset Gal^\partial(\cM_{K(x)},\eta_{K(x)})\otimes_{K(x)}\C(x).
$$
Moreover there exists a finitely generated extension $K^\p$ of $K$
such that
$$
Gal^\partial(\cM_{K(x)}\otimes_{K(x)}{K^{\p}(x)},\eta_{K^{\p}(x)})\otimes_{K^{\p}(x)}\C(x)\cong
Gal^\partial(\cM_{\C(x)},\eta_{\C(x)}).
$$
\end{prop}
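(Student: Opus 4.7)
The plan is to establish the two statements separately, using functoriality of the constructions of differential linear algebra under base change for the inclusion, and the noetherianity argument (via the line-stabilizer corollary above) for the existence of $K^\p$.

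First, I would prove the inclusion. Constructions of differential linear algebra commute with extension of scalars: every object $\cW_{\C(x)}$ of $\langle\cM_{\C(x)}\rangle^{\otimes,\partial}$ is obtained from the corresponding construction $\cW_{K(x)}$ in $\langle\cM_{K(x)}\rangle^{\otimes,\partial}$ by tensoring with $\C(x)$, and this is compatible with both $\Sgq$ and the prolongation functor $F_\partial$. Consequently, if $\cN_{K(x)}\subset \cW_{K(x)}$ is a sub-$q$-difference module, then $\cN_{K(x)}\otimes_{K(x)}\C(x)$ is a sub-$q$-difference module of $\cW_{\C(x)}$. Hence every element of $Gal^\partial(\cM_{\C(x)},\eta_{\C(x)})$ stabilizes in particular all submodules of the form $\cN_{K(x)}\otimes_{K(x)}\C(x)$, which shows the containment
$$
Gal^\partial(\cM_{\C(x)},\eta_{\C(x)})\subset Gal^\partial(\cM_{K(x)},\eta_{K(x)})\otimes_{K(x)}\C(x).
$$

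For the equality up to a finitely generated extension, I would use the corollary after Lemma \ref{lemma:inclgroup}, which says that $Gal^\partial(\cM_{\C(x)},\eta_{\C(x)})$ is the differential stabilizer of a single line $L_{\C(x)}$ sitting inside some construction $\cW_{\C(x)}\in\langle\cM_{\C(x)}\rangle^{\otimes,\partial}$, with $L_{\C(x)}$ itself a $q$-difference submodule of $\cW_{\C(x)}$. Writing $\cW_{\C(x)}=\cW_{K(x)}\otimes_{K(x)}\C(x)$, the line $L_{\C(x)}$ is generated by a single vector with finitely many coordinates in $\C(x)$. Adjoining all these coordinates to $K$ produces a finitely generated extension $K^\p/K$, and the line descends to a $K^\p(x)$-line $L_{K^\p(x)}\subset \cW_{K^\p(x)}:=\cW_{K(x)}\otimes_{K(x)}K^\p(x)$.

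The key technical step, which I expect to be the main obstacle, is to verify that $L_{K^\p(x)}$ actually lies in $\langle\cM_{K^\p(x)}\rangle^{\otimes,\partial}$, i.e. that it is $\Sgq$-stable (and, being a line, automatically $F_\partial$-compatible in the sense needed). But the condition $\Sgq(L_{K^\p(x)})\subset L_{K^\p(x)}$ is a linear (in fact $K^\p(x)$-rational) condition on the coordinates of a generator of $L_{K^\p(x)}$, and it is inherited from the same condition on $L_{\C(x)}$ by faithful flatness of $K^\p(x)\to \C(x)$. The same faithful flatness argument shows that the differential stabilizer $\cG_{K^\p(x)}$ of $L_{K^\p(x)}$ inside $\GL(M_{K^\p(x)})$ satisfies $\cG_{K^\p(x)}\otimes_{K^\p(x)}\C(x)=Gal^\partial(\cM_{\C(x)},\eta_{\C(x)})$.

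To conclude, I would chain inclusions: by the first part, applied over $K^\p(x)$,
$$
Gal^\partial(\cM_{\C(x)},\eta_{\C(x)})
\subset Gal^\partial(\cM_{K(x)}\otimes_{K(x)}K^\p(x),\eta_{K^\p(x)})\otimes_{K^\p(x)}\C(x);
$$
on the other hand, since the parameterized generic Galois group over $K^\p(x)$ stabilizes every object of $\langle\cM_{K^\p(x)}\rangle^{\otimes,\partial}$ and in particular $L_{K^\p(x)}$, it is contained in $\cG_{K^\p(x)}$, so base-changing gives
$$
Gal^\partial(\cM_{K(x)}\otimes_{K(x)}K^\p(x),\eta_{K^\p(x)})\otimes_{K^\p(x)}\C(x)
\subset \cG_{K^\p(x)}\otimes_{K^\p(x)}\C(x)=Gal^\partial(\cM_{\C(x)},\eta_{\C(x)}).
$$
Both inclusions together force equality, yielding the isomorphism.
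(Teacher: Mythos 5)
Your proposal is correct and follows essentially the same route the paper intends: the paper offers no written proof beyond the remark that ``a (differential) noetherianity argument on the submodules stabilized by those groups'' yields the result, and your argument is precisely that noetherianity argument spelled out --- the first inclusion by base change of the stabilized submodules, and the descent to a finitely generated $K^\p$ via the corollary reducing $Gal^\partial(\cM_{\C(x)},\eta_{\C(x)})$ to the stabilizer of a single line. The only cosmetic caveat is that your opening claim should be phrased for objects of $Constr^\partial_{\C(x)}(M_{\C(x)})$ (which do commute with extension of scalars) rather than for arbitrary objects of $\langle\cM_{\C(x)}\rangle^{\otimes,\partial}$, but this is exactly what your argument actually uses.
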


We can informally rephrase Theorem \ref{thm:diffgenGalois} in the
following way:

\begin{thm}\label{thm:complexmodulesgendiffGalois}
The parameterized generic Galois group $Gal^\partial(\cM_{\C(x)},\eta_{\C(x)})$ is the smallest
differential subgroup of $\GL_\nu(M_{\C(x)})$ that
contains a nonempty cofinite set of curvatures of the $q$-difference module $\cM_{K(x)}$.
\end{thm}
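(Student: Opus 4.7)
My plan is to derive Theorem~\ref{thm:complexmodulesgendiffGalois} by combining Proposition~\ref{prop:finitegenextension} with Theorem~\ref{thm:diffgenGalois}, reducing the statement about $\C(x)$ to the arithmetic statement already proved over $K(x)$.

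First I would fix a finitely generated extension $K/\Q$ and a $K(x)$-form $\cM_{K(x)}$ of $\cM_{\C(x)}$, as in the paragraph preceding Proposition~\ref{prop:finitegenextension}. By that proposition, there exists a finitely generated extension $K^\p/K$ such that, replacing $K$ by $K^\p$, we have
$$Gal^\partial(\cM_{K(x)},\eta_{K(x)})\otimes_{K(x)}\C(x)\cong Gal^\partial(\cM_{\C(x)},\eta_{\C(x)}).$$
The arithmetic setup of \S\ref{subsec:car0} (the data $\cC$, $\phi_v$, $\kappa_v$, the algebra $\cA$, and the $\Sgq$-stable lattice $M$) is attached to the $K(x)$-model; the curvatures $\Sgq^{\kappa_v}$ modulo $\phi_v$ are intrinsic to this model. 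The meaning of ``curvatures of $\cM_{K(x)}$'' in the statement is just this family.

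Next, by Theorem~\ref{thm:diffgenGalois} applied to $\cM_{K(x)}$, the group $Gal^\partial(\cM_{K(x)},\eta_{K(x)})$ is the smallest closed differential subgroup of $\GL(M_{K(x)})$ containing $\Sgq^{\kappa_v} \bmod \phi_v$ for almost all $v\in\cC$. I would then extend scalars to $\C(x)$. Closed differential subgroups of $\GL(M_{K(x)})$ are cut out by radical $\partial$-ideals in the Hopf algebra $K(x)\l\{Y,\frac{1}{\det Y}\r\}_\partial$; by flatness of $K(x)\to \C(x)$, tensoring with $\C(x)$ preserves inclusions of such ideals, and the $\partial$-noetherianity granted by Theorem~\ref{thm:rittraudenbush} ensures that the operation of taking ``the smallest closed differential subgroup containing a family of operators'' commutes with the base change $K(x)\to \C(x)$. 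Since the curvatures $\Sgq^{\kappa_v}\bmod\phi_v$ are unaffected by this base change (they live in $M\otimes_{\cA}\cA/(\phi_v)$, which is not touched by enlarging $\cF$), we deduce that $Gal^\partial(\cM_{K(x)},\eta_{K(x)})\otimes_{K(x)}\C(x)$ is the smallest closed differential subgroup of $\GL(M_{\C(x)})$ containing these curvatures. Combined with the isomorphism from Proposition~\ref{prop:finitegenextension}, this yields the theorem.

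The main technical obstacle I expect is justifying rigorously the commutation of ``smallest differential subgroup containing a set of operators'' with the faithfully flat extension $K(x)\subset\C(x)$. This rests on two ingredients: Ritt--Raudenbush $\partial$-noetherianity, which turns the construction into a finite intersection of stabilizers of finitely many $q$-difference submodules in constructions of differential linear algebra, and the functorial stabilizer description of differential subgroups given in the proof of the proposition preceding Lemma~\ref{lemma:inclgroup}. Once this is in place, the theorem is an immediate consequence of Theorem~\ref{thm:diffgenGalois} and Proposition~\ref{prop:finitegenextension}, and the result is genuinely an informal reformulation, as the statement itself suggests.
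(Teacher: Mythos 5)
Your proposal is correct and follows exactly the route the paper intends: the theorem is presented there as an informal rephrasing of Theorem~\ref{thm:diffgenGalois} via Proposition~\ref{prop:finitegenextension}, with no further proof given. Your filling-in of the base-change step (Ritt--Raudenbush noetherianity reducing the smallest group to a finite intersection of stabilizers, plus the fact that the curvatures live on the $\cA$-lattice and are untouched by enlarging the base field) is precisely the ``(differential) noetherianity argument, that we have already used several times'' that the paper invokes.
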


\section{The example of the Jacobi Theta function}

Consider the Jacobi Theta function
$$
\Theta(x)=\sum_{n\in\Z}q^{-n(n-1)/2}x^n,
$$
which is solution of the $q$-difference equation
$$
\Theta(qx)=qx\Theta(x).
$$
Iterating the equation, one proves that $\Theta$ satisfies
$y(q^nx)=q^{n(n+1)/2}x^ny(x)$, for any $n\geq 0$,
therefore we immediately deduce that the generic Galois group of the rank one
$q$-difference module $\cM_{\Theta}=(K(x).\Theta,\Sgq)$, with
$$
\begin{array}{rccc}
\Sgq&:K(x).\Theta&\longrightarrow & K(x).\Theta\\~\\
&f(x)\Theta&\longmapsto &f(qx)qx\Theta
\end{array},
$$
is the whole multiplicative group ${\mathbb G}_{m,K(x)}$.
As far as the parameterized generic Galois group is concerned we have:

\begin{prop}\label{prop:Thetagroups}
The parameterized generic Galois group
$Gal^{\partial}\l(\cM_\Theta, \eta_{K(x)}\r)$ is defined by
$\partial(\partial(y)/y)=0$.
\end{prop}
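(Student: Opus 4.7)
The plan is to apply Theorem~\ref{thm:diffgenGalois}, which identifies $Gal^\partial(\cM_\Theta,\eta_{K(x)})$ with the smallest closed differential subgroup of ${\mathbb G}_{m,K(x)}=\GL(\cM_\Theta)$ containing the $v$-curvatures $\Sgq^{\kappa_v}\bmod\phi_v$ for almost all $v\in\cC$. Iterating $\Sgq(\Theta)=qx\,\Theta$ one obtains $\Sgq^n(\Theta)=q^{n(n+1)/2}x^n\,\Theta$, so in the basis $\Theta$ the $v$-curvature is the scalar $\Lambda_v:=q^{\kappa_v(\kappa_v+1)/2}x^{\kappa_v}$. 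Since $q^{\kappa_v}\equiv 1\pmod{\phi_v}$, the prefactor squares to $1$ modulo $\phi_v$ and hence reduces to $\varepsilon_v\in\{\pm 1\}$ (specifically, to $+1$ if $\kappa_v$ is odd and to $-1$ if $\kappa_v$ is even, by the minimality of the order $\kappa_v$); thus $\Lambda_v=\varepsilon_v\,x^{\kappa_v}$.

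Let $G:=\{y\in{\mathbb G}_{m,K(x)}:\partial(\partial y/y)=0\}$. The inclusion $Gal^\partial(\cM_\Theta,\eta_{K(x)})\subseteq G$ follows directly from the curvature formula: $\partial\Lambda_v/\Lambda_v=\kappa_v\in\Z$ is a $\partial$-constant, so $\partial(\partial\Lambda_v/\Lambda_v)=0$ and $\Lambda_v\in G(\cA/(\phi_v))$ for every $v$, and minimality in Theorem~\ref{thm:diffgenGalois} yields the claim.

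For the reverse inclusion I would combine Corollary~\ref{cor:diffdens} with the fact, recalled just before the proposition, that the algebraic generic Galois group of $\cM_\Theta$ is all of ${\mathbb G}_{m,K(x)}$, to conclude that $Gal^\partial(\cM_\Theta,\eta_{K(x)})$ is Zariski-dense in ${\mathbb G}_{m,K(x)}$. Cassidy's classification puts the Zariski-dense differential algebraic subgroups of ${\mathbb G}_m$ over $K(x)$ in order-reversing bijection with monic left ideals of $K(x)[\partial]$, via $L\mapsto H_L:=\{y:L(\partial y/y)=0\}$; the subgroups contained in $G=H_\partial$ correspond to operators $L$ right-dividing $\partial$, and the only such $L$, up to a nonzero scalar in $K(x)$, are $L=\partial$ and $L=1$, giving the two sole candidates $G$ itself and $H_1=\{y:\partial y=0\}$, whose $K(x)$-points form the constants $K^*$.

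The main obstacle is then to rule out $Gal^\partial(\cM_\Theta,\eta_{K(x)})=H_1$: this would require $\partial\Lambda_v\equiv 0\pmod{\phi_v}$, i.e.\ $\kappa_v\equiv 0$ in $\cA/(\phi_v)$, for almost all $v$. When $q$ is algebraic over $\Q$ but not a root of unity, $\kappa_v$ is the multiplicative order of $q$ in the residue field at $v$, so it divides $\#(\cO_Q/\pi_v)^*$ and is therefore coprime to the residue characteristic; when $q$ is transcendental over $\Q$ or a root of unity, $\cA/(\phi_v)$ has characteristic zero, so any positive integer is nonzero there. In every case $\kappa_v$ is a unit of $\cA/(\phi_v)$, producing the required contradiction and forcing $Gal^\partial(\cM_\Theta,\eta_{K(x)})=G$.
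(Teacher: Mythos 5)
Your proof is correct, and while the upper bound $Gal^{\partial}(\cM_\Theta,\eta_{K(x)})\subseteq\{\partial(\partial y/y)=0\}$ is obtained exactly as in the paper (reduce the iterated equation modulo $\phi_v$, observe that $\partial\Lambda_v/\Lambda_v=\kappa_v$ is a constant, and invoke the minimality in Theorem \ref{thm:diffgenGalois}), your lower bound goes by a genuinely different route. The paper applies the logarithmic derivative $y\mapsto\partial y/y$, notes that it maps the group into $\{z:\partial z=0\}\cong\mathbb G_{a,K}$, and concludes by asserting that since $Gal^{\partial}(\cM_\Theta,\eta_{K(x)})$ is not finite the image must be all of $\mathbb G_{a,K}$; as written this does not by itself exclude the \emph{infinite} kernel $\{y:\partial y=0\}$ of the logarithmic derivative, i.e.\ the constant subgroup of $\mathbb G_m$. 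You instead combine the Zariski density of Corollary \ref{cor:diffdens} (together with the computation, made just before the proposition, that the algebraic generic Galois group is all of $\mathbb G_{m,K(x)}$) with Cassidy's classification of Zariski-dense differential subgroups of $\mathbb G_m$ as kernels $H_L=\{y:L(\partial y/y)=0\}$, reducing the problem to the two candidates $H_\partial$ and $H_1=\{y:\partial y=0\}$, and then you rule out $H_1$ by the arithmetic observation that $\kappa_v$ is a unit in $\cA/(\phi_v)$ (prime to the residue characteristic when $q$ is algebraic, and automatic in characteristic zero otherwise), so that $\partial\Lambda_v\not\equiv 0\pmod{\phi_v}$. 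This last step is precisely the point the paper's finiteness argument glosses over, so your version is more careful there, at the cost of importing Cassidy's classification; the paper's argument is more elementary in spirit. Two cosmetic remarks: the sign $\varepsilon_v$ of the prefactor is immaterial (and in a non-domain $\cO_Q/(\phi_v)$ a square root of $1$ need not literally be $\pm1$), and strictly speaking ``$G$ contains $\Lambda_v$ modulo $\phi_v$'' should be checked in the sense of the stabilized line of the definition preceding Theorem \ref{thm:diffgenGalois}, exactly as the paper itself implicitly does for a rank-one module.
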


\begin{proof}
For almost any $v\in\cC$, the reduction modulo $\phi_v$
of $q^{\kappa_v(\kappa_v+1)/2}x^{\kappa_v}$ is the monomial
$x^{\kappa_v}$, which satisfies the equation $\partial\l(\frac{\partial x^{\kappa_v}}{x^{\kappa_v}}\r)=0$.
This means that parameterized generic Galois group $Gal^{\partial}\l(\cM_\Theta, \eta_{K(x)}\r)$ is a subgroup of
the differential group defined by $\partial\l(\frac{\partial y}{y}\r)=0$.
In other words, the logarithmic derivative
$$
\begin{array}{ccc}
\mathbb G_m & \longrightarrow & \mathbb G_a\\
y & \longmapsto & \frac{\partial y}{y}
\end{array}
$$
sends $Gal^{\partial}\l(\cM_\Theta, \eta_{K(x)}\r)$ into a subgroup of the additive group $\mathbb G_{a,K(x)}$ defined by the equation
$\partial z=0$. This is nothing else that $\mathbb G_{a,K}$.
Since $Gal^{\partial}\l(\cM_\Theta, \eta_{K(x)}\r)$ is not finite, it must be the whole group $\mathbb G_{a,K}$.
\end{proof}

Let us consider a norm $|~|$ on $K$ such that $|q|\neq 1$.
The differential dimension of the subgroup $\partial\l(\frac{\partial y}{y}\r)=0$ is zero.
We will show in \cite{diviziohardouinComp}
(see also \cite[\S9, Corollary 9.12]{diviziohardouin})
that this means that $\Theta$ is differentially algebraic over
the field of rational functions $\wtilde C_E(x)$ with coefficients in the differential closure
$\wtilde C_E$ of the elliptic function over $K^*/q^\Z$.
In fact, the function $\Theta$ satisfies
$$
\sgq\l(\frac{\partial\Theta}{\Theta}\r)=\frac{\partial\Theta}{\Theta}+1,
$$
which implies that $\partial\l(\frac{\partial\Theta}{\Theta}\r)$
is an elliptic function.
Since the Weierstrass function is differentially algebraic over $K(x)$,
the Jacobi Theta function is also differentially algebraic over $K(x)$.
\par
If $q$ is transcendental over $\Q$ we can also consider the derivation $\frac{d}{dq}$.
This case is studied in \cite{diviziohardouinPacific}.

\section{The Kolchin closure of the Dynamic and the  Malgrange-Granier groupoid}
\label{sec:malgrangegranieralg}

A. Granier has defined a $D$-groupoid for non linear $q$-difference equations, in analogy with Malgrange
$D$-groupoid for nonlinear differential equations.
In this section we prove that the
Malgrange-Granier groupoid in the special case of linear $q$-difference equations essentially ``coincides''
with the parameterized generic Galois group.
To prove this result we construct an algebraic $D$-groupoid called $\Gal{A}$
and we show on one hand that $\Gal{A}$ and the Malgrange-Granier groupoid $\Galan{A}$, which is an analytic object, have the same solutions, and, on the
other hand, that the solutions of the sub-$D$-groupoid of $\Gal{A}$,
that fixes the transversals, coincides with the solutions of the differential equations
defining the parameterized generic Galois group.
In the differential case, the problem of the algebraicity of the $D$-groupoid has been tackled
in more recent works by B. Malgrange himself.
\par
To prove the results below we have had to deal with some difficulties.
First of all, Malgrange proves that his $D$-groupoid, in the special case of a linear differential equation,
allows to recover the Picard-Vessiot Galois group (see \cite{MalgGGF}).
The foliation associated to the solutions of the nonlinear differential equation, which exists
due to the Cauchy theorem, plays a central role in his proof, and actually in the whole theory.
There is a true hindrance to prove a Cauchy theorem and define a foliation over $\C$ attached to a $q$-difference system.
First of all, the solutions of a $q$-difference equation must be defined over a $q$-invariant domain and
they usually have an essential singularity at $0$ and at $\infty$. This fact prevents the existence if a
local solution on a compact domain and therefore a transposition of the Cauchy theorem. 
In \cite{GranierFourier}, A. Granier defines the Galois $D$-groupoid of a $q$-difference system as the $D$-envelop of the dynamic of the system.
To overcome the lack of local solutions, we use Theorem \ref{thm:diffgenGalois} as a crucial ingredient of the proof.
We recover in this way the parameterized generic Galois group, rather than the Picard-Vessiot Galois group
(on this point see \S\ref{subsec:MalgrangeGranier} below).
However, some steps of the proof are similar to Malgrange theorem (\cf \cite{MalgGGF}).
We also recover Granier's result
for linear $q$-difference systems with constant coefficients
(\cf \cite[\S 2.1]{GranierFourier}).
\par
These results shall give some hints to compare the algebraic definitions of Morikawa
of the Galois group of a nonlinear
$q$-difference equation and the analytic definitions of A.Granier
(\cf \cite{morikawa}, \cite{morikawaumemura}, \cite{umemurapreprint}).

\medskip
Let $q \in \C^* $ be not a root of unity and let $A(x)\in \GL_\nu(\C(x))$.
We consider the linear $q$-difference system
\begin{equation}\label{eqn:qsyst}
Y(qx)=A(x)Y(x).
\end{equation}
We set:
$$
\begin{array}{l}
A_k(x):=A(q^{k-1}x)\dots A(qx)A(x)~\hbox{for all~}k\in\Z,\,k>0;\\
A_0(x)=Id_\nu\\
A_k(x):=A(q^{k}x)^{-1}A(q^{k+1}x)^{-1}\dots A(q^{-1}x)^{-1}~\hbox{for all~}k\in\Z,\,k<0,
\end{array}
$$
so that $Y(q^kx)=A_k(x)Y(x)$, for any $k\in\Z$.
Following the Appendix, we denote by $M$ the analytic complex variety $\P^1_\C \times \C^\nu$, by $\Galan{A(x)}$ the
Galois $D$-groupoid of the system (\ref{eqn:qsyst}) \ie the $D$-envelop of the dynamic
\beq\label{eq:dyn}
Dyn(A(x))=\l\{(x,X)\longmapsto (q^k x, A_k(x)X)\,:\,k\in\Z\r\}
\eeq
in the space of jets $J^*(M,M)$.
We keep the notation of \S\ref{sec:anDgroupoid}, which is preliminary to the content of this section.

\medskip
\noindent{\it Warning.}
Following Malgrange and the convention in \S\ref{subsec:defmalgrange},
we say that a $D$-groupoid $\cH$ is contained
in a $D$-groupoid $\cG$ if the groupoid of solutions of $\cH$ is contained in the groupoid of
solutions of $\cG$.
We will write
$sol(\cH)\subset sol(\cG)$ or equivalently $\cI_{\cG}\subset \cI_\cH$, where
$\cI_\cG$ and $\cI_\cH$ are the (sheaves of) ideals of definition of $\cG$ and $\cH$, respectively.

\paragraph{Notation.} In this section we introduce many tools that we use to get the proof of our main result
Corollary \ref{cor:Malgrangoide}. For the reader convenience we make a list of them here, with the reference for their definitions:

\begin{tabular}{llll}
$Dyn(A)$, & \eqref{eq:dyn};\\
$\Galan{A}$, &\S\ref{subsec:anagrouplin};
    &$\wtilde{\mathcal{G}al(A(x))}$, &Definition \ref{defn:tildegal};\\
$\Gal{A}$, &Definition \ref{defn:kol}; &$\Galt{A}$, &Definition \ref{defn:galt};\\
$\Kol{A}$, &Definition \ref{defn:kol}; &$\Kolt{A}$, &Defintion \ref{defn:kolt};\\
$\cL in$,  &Proposition \ref{prop:DgroupoideLin}; &$\cT rv$, &Defintion \ref{defn:trv}.
\end{tabular}


\subsection{The groupoid $\Gal{A}$}

Let $\C(x)\l\{ T,\frac{1}{\det T}\r\}_{\partial}$, with
$T=(T_{i,j}:i,j=0,1,\dots,\nu)$, be the algebra of differential rational functions
over $\GL_{\nu+1}(\C(x))$.
We consider the following morphism of $\partial$-differential $\C[x]$-algebras
$$
\begin{array}{rccc}
\tau:&\C[x]\l\{ T,\frac{1}{\det T}\r\}_{\partial} & \longrightarrow & H^0(M \times_\C M, \cO_{ J^*(M,M)}) \\~\\
&\begin{pmatrix}
  T_{0,0} & T_{0,1}& \hdots & T_{0,\nu} \\
  T_{1,0} & \\
  \vdots & & (T_{i,j})_{i,j} & \\
  T_{\nu,0} & \end{pmatrix}
& \longmapsto &
\begin{pmatrix}
  \frac{\partial \ol{x}}{\partial x} &\frac{\partial \ol{x}}{\partial X_1} & \hdots & \frac{\partial \ol{x}}{\partial X_\nu} \\
  \frac{\partial \ol{X_1}}{\partial x}& \\
  \vdots & & \l(\frac{\partial \ol{X_i}}{\partial X_j}\r)_{i,j} & \\
  \frac{\partial \ol{X_\nu}}{\partial x} &
  \end{pmatrix}
\end{array}
$$
from $\C[x]\l\{ T,\frac{1}{\det T}\r\}_{\partial}$ to the global sections $H^0(M \times_\C M, \cO_{ J^*(M,M)})$ of $\cO_{ J^*(M,M)}$,
that can be thought as the algebra of global partial differential equations over $M\times M$.
The image by $\tau$ of the differential ideal
$$
\cI=\l(T_{0,1},\dots,T_{0,\nu},T_{1,0},\dots,T_{\nu,0},\partial(T_{0,0})\r),
$$
that defines the linear differential group
$$
\l\{diag(\alpha, \beta(x)):=\begin{pmatrix} \alpha & 0 \\0 & \beta(x)\end{pmatrix}\,:\,\mbox{where} \;
\alpha \in \C^* \; \mbox{and} \; \beta(x) \in \GL_{\nu}(\C(x)) \r\},
$$
is contained in the ideal $\cI_{\cL in}$ defining the $D$-groupoid $\cL in $
(\cf Proposition \ref{prop:DgroupoideLin}).

\begin{defn}\label{defn:kol}
We call $\Kol{A}$ the smallest differential subvariety of
$\GL_{\nu+1}(\C(x))$, defined over $\C(x)$, which contains
$$
\l\{
diag(q^k, A_k(x)):=\begin{pmatrix} q^k & 0 \\0 & A_k(x)\end{pmatrix}\,:\,k\in\Z\r\},
$$
and has the following property:
if we call $I_{\Kol{A}}$ the differential ideal defining $\Kol{A}$ and $I_{\Kol{A}}^\p=I_{\Kol{A}}\cap\C[x]\l\{ T,\frac{1}{\det T}\r\}_{\partial}$, then the
(sheaf of) differential ideal $\langle\cI_{\cL in}, \tau(I_{\Kol{A}}^\p)\rangle$
generates a $D$-groupoid, that we will call $\Gal{A}$, in the space of jets $J^*(M,M)$.
\end{defn}

\begin{rmk}\label{rmk:malgrange}
The definition above requires some explanations:
\begin{itemize}
\item
The phrase ``smallest differential subvariety of $\GL_{\nu+1}(\C(x))$'' must be understood in the following way.
The ideal of definition of $\Kol{A}$ is the largest differential ideal of $\C(x)\l\{ T,\frac{1}{\det T}\r\}_{\partial}$
which admits the
matrices $diag(q^k, A_k(x))$ as solutions for any $k\in\Z$ and verifies the second requirement of the definition.
Then $I_{\Kol{A}}$ is radical and the Ritt-Raudenbush theorem (\cf Theorem \ref{thm:rittraudenbush} above)
implies that $I_{\Kol{A}}$ is finitely generated.
Of course, the $\C(x)$-rational points of $\Kol{A}$ may give very poor information on its structure, so we would rather speak of solutions in a differential
closure of $\C(x)$.
\item
The structure of $D$-groupoid has the following consequence on the points of $\Kol{A}$:
if $diag(\a, \be(x))$ and $diag(\ga, \de(x))$ are two matrices with entries in a differential extension
of $\C(x)$ that belong to $\Kol{A}$
then the matrix $diag(\a\ga,\be(\ga x) \de(x))$ belongs to $\Kol{A}$.
In other words, the set of local diffeomorphisms $(x,X)\mapsto(\a x,\be(x)X)$ of $M\times M$ such that
$diag(\a, \be(x))$ belongs to $\Kol{A}$ forms a set theoretic groupoid.
We could have supposed only that $\Kol{A}$ is a differential variety and the solutions of $\Kol{A}$
form a groupoid in the sense above, but this wouldn't have been enough. In fact, it is not known if a sheaf of differential ideals of $J^*(M,M)$ whose solutions
forms a groupoid is actually a $D$-groupoid (\cf Definition \ref{defn:Dgroupoid}, and in particular conditions (ii') and (iii')).
B. Malgrange told us that he can only prove this statement for a Lie algebra.
\end{itemize}
\end{rmk}

The differential variety $\Kol{A}$ is going to be a bridge between the parameterized generic Galois group and the Galois $D$-groupoid $\Galan{A}$
defined in the appendix, \emph{via} the following theorem.

\begin{defn}\label{defn:kolt}
Let $\cM_{\C(x)}^{(A)}:=(\C(x)^\nu, \Sgq: X \mapsto A^{-1}\sgq(X))$ be the $q$-difference module over $\C(x)$
associated to the system $Y(qx)=A(x)Y(x)$, where $\sgq(X)$ is defined componentwise. We call
$\Kolt{A}$ the differential group over $\C(x)$
defined by the differential ideal
$\langle I_{\Kol{A}},T_{0,0}-1\rangle$ in $\C(x)\l\{ T,\frac{1}{\det T}\r\}_{\partial}$.
\end{defn}

Notice that, as for the Zariski closure, the Kolchin closure does not commute with the intersection, therefore
$\Kolt{A}$ is not the Kolchin closure of $\{A_k(x)\}_k$.
Then we have:

\begin{thm}\label{thm:clotkol}
$Gal^{\partial}(\cM_{\C(x)}^{(A)}, \eta_{\C(x)})\cong\Kolt{A}$.
\end{thm}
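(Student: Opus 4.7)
The plan is to prove the isomorphism by establishing both inclusions of differential subgroups of $\GL(M_{\C(x)})\cong\GL_\nu(\C(x))$, embedded into $\GL_{\nu+1}(\C(x))$ as matrices $diag(1,\beta)$.

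For the inclusion $Gal^{\partial}(\cM_{\C(x)}^{(A)},\eta_{\C(x)})\subset\Kolt{A}$, I would apply the curvature characterization of Theorem \ref{thm:complexmodulesgendiffGalois} after descending to a finitely generated extension $K$ of $\Q$ via Proposition \ref{prop:finitegenextension}. By Definition \ref{defn:kol}, for each $v\in\cC$ the matrix $diag(q^{\kappa_v},A_{\kappa_v}(x))$ lies in $\Kol{A}$; since $q^{\kappa_v}\equiv 1\pmod{\phi_v}$, its reduction modulo $\phi_v$ equals $diag(1,A_{\kappa_v}(x)\bmod\phi_v)$, which therefore lies in $\Kolt{A}$ modulo $\phi_v$ by the very definition $\Kolt{A}=\Kol{A}\cap\{T_{0,0}=1\}$. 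Equivalently, the $v$-curvature $\Sgq^{\kappa_v}\bmod\phi_v$ (whose matrix is $A_{\kappa_v}^{-1}\bmod\phi_v$) is contained in $\Kolt{A}$ modulo $\phi_v$ for almost all $v\in\cC$, and the minimality of $Gal^{\partial}$ in Theorem \ref{thm:complexmodulesgendiffGalois} yields the inclusion.

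For the reverse inclusion $\Kolt{A}\subset Gal^{\partial}(\cM_{\C(x)}^{(A)},\eta_{\C(x)})$, I would invoke the differential Chevalley theorem together with Ritt-Raudenbush noetherianity to fix a single line $L$, contained in a $\Sgq$-stable sub-$q$-difference module of some construction $\cW$ of differential linear algebra of $\cM_{\C(x)}^{(A)}$, whose differential stabilizer in $\GL(M_{\C(x)})$ is exactly $Gal^\partial$. The stabilization of $L$ under the natural action of $\cL in$ on constructions is governed by a differential polynomial condition on $(\a,\beta)$, and thus defines a sub-$D$-groupoid $\cG_L$ of $\cL in$. The $\Sgq$-stability of $L$ translates precisely into the containment of the dynamic $Dyn(A)=\{diag(q^k,A_k(x)):k\in\Z\}$ in $\cG_L$. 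The minimality of $\Kol{A}$ in Definition \ref{defn:kol} then gives $\Kol{A}\subset\cG_L$, hence
$$
\Kolt{A}\subset\cG_L\cap\{T_{0,0}=1\}=\{diag(1,\beta):\beta\text{ stabilizes }L\}=Gal^\partial.
$$

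The main obstacle is constructing the sub-$D$-groupoid $\cG_L$ inside $\cL in$ encoding the stabilization of $L$ and verifying conditions (ii') and (iii') of the $D$-groupoid axioms (as recalled in the appendix), together with checking that the natural action of $diag(q^k,A_k(x))$ on constructions of $\cM_{\C(x)}^{(A)}$ preserves $L$ whenever $L$ is $\Sgq$-stable. A subtlety arises when $\cW$ involves the prolongation functor $F_\partial$, because by Remark \ref{rmk:prolongationmatrix} the action then involves derivatives of $\beta$ in $x$, so the stability condition becomes genuinely differential-polynomial rather than merely polynomial, as in the classical algebraic Galois setting treated by Malgrange.
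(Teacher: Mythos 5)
Your proposal follows the paper's proof in both directions: the inclusion $Gal^{\partial}\subset\Kolt{A}$ via the curvature characterization of Theorem \ref{thm:complexmodulesgendiffGalois}, descent to a finitely generated $K$, and the congruence $q^{\kappa_v}\equiv 1\pmod{\phi_v}$; and the reverse inclusion via the minimality of $\Kol{A}$ against the differential-polynomial stabilization conditions, specialized from $T_{0,0}=q^k$ to $T_{0,0}=1$. The only organizational difference is that the paper argues with all sub-$q$-difference modules of all constructions rather than reducing first to a single Chevalley line, and it likewise leaves essentially implicit the point you flag as your main obstacle, namely that the stabilizer equations together with $\cL in$ generate a genuine $D$-groupoid so that the minimality of $\Kol{A}$ applies.
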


\begin{rmk}
One can define in exactly the same way an algebraic subvariety $\Zar A$
of $\GL_{\nu+1}(\C(x))$ containing the dynamic of the system and such that
$$
\l\{(x,X)\mapsto(\a x,\be(x)X):diag(\a, \be(x))\in\Zar A\r\}
$$
is a subgroupoid
of the groupoid of diffeomorphisms of $M\times M$.
Then one proves in the same way that $\Zart{A}$ coincide with the generic Galois group,
introduced in \cite{diviziohardouinqGroth}.
\end{rmk}

\begin{proof}
Let $\cN=constr^{\partial}(\cM)$ be a construction of differential algebra of $\cM$.
We can consider:
\begin{itemize}
\item
The basis denoted by $constr^{\partial}(\ul{e})$ of $\cN$ built from
the canonical basis $\ul{e}$ of $\C(x)^\nu$,
applying the same constructions of differential algebra.

\item
For any $\beta \in \GL_\nu(\C(x))$, the matrix
$constr^{\partial}(\beta)$ acting on $\cN$ with respect to the basis $constr^{\partial}(\ul{e})$,
obtained from $\beta$ by functoriality.
Its coefficients lies in $\C(x)[\beta, \partial(\beta),...]$

\item
Any $\psi=(\alpha, \beta) \in \C^* \times \GL_\nu(\C(x))$ acts semilinearly
on $\cN$ in the following way:
$\psi\ul e=(constr^{\partial}(\beta))^{-1}\ul e$ and $\phi(f(x)n)=f(\a x)n$, for any $f(x)\in \C(x)$ and $n\in\cN$.
In particular,
$(q^k, A_k(x))\in \C^* \times \GL_\nu(\C(x)) $ acts as $\Sgq^k$ on $\cN$.
\end{itemize}
A sub-$q$-difference module $\cE$ of $\cN$ correspond to an invertible matrix
$F \in \GL_\nu(\C(x))$ such that
\begin{equation}\label{eqn:sev}
F(q^kx)^{-1} constr^{\partial}(A_k) F(x)
= \begin{pmatrix}* & *\\0 & * \end{pmatrix},
\,\hbox{for any $k\in\Z$}.
\end{equation}
Now, $(1, \beta)\in \C^*\times \GL_\nu(\C(x))$ stabilizes $\cE$ if and only if
\begin{equation}\label{eqn:stab}
F(x)^{-1} constr^{\partial}(\beta) F(x) =
\begin{pmatrix}* & *\\0 & * \end{pmatrix}.
\end{equation}
Equation (\ref{eqn:sev}) corresponds to a differential polynomial $L(T_{0,0}, (T_{i,j})_{i,j \geq 1})$ belonging to $\C(x)\l\{ T,\frac{1}{\det T}\r\}_{\partial}$
and having the property that $L(q^k, (A_k))=0$ for all $k \in \Z$. On the other hand
\eqref{eqn:stab} corresponds to $L(1, (T_{i,j})_{i,j \geq 1}))$.
It means that the solutions of the differential ideal $\langle I_{\Kol{A}} ,T_{0,0}-1\rangle\subset \C(x)\l\{ T,\frac{1}{\det T}\r\}_{\partial}$
stabilize all the sub-$q$-difference modules of all the constructions of differential algebra, and hence that
$$
\Kolt{A} \subset Gal^{\partial}(\cM_{\C(x)}, \eta_{\C(x)}).
$$
Let us prove the inverse inclusion.
In the notation of Theorem \ref{thm:complexmodulesgendiffGalois}, there exists a finitely generated extension
$K$ of $\Q$ and a $\sgq$-stable subalgebra $\cA$ of $K(x)$ of the forms considered in \S\ref{subsec:car0} such that:
\begin{enumerate}
\item
$A(x)\in \GL_\nu(\cA)$, so that it defines a $q$-difference module $\cM_{K(x)}$ over $K(x)$;
\item
$Gal^\partial(\cM_{K(x)}^{(A)},\eta_{K(x)})\otimes_{K(x)}\C(x)\cong Gal^\partial(\cM_{\C(x)}^{(A)},\eta_{\C(x)})$;
\item
$\Kol{A}$ is defined over $\cA$, \ie there exists a differential ideal $I$ in the differential ring $\cA \{T, \frac{1}{\det(T)} \}_{\partial}$ such that
$I$ generates $I_{\Kol{A}}$ in $\C(x)\l\{ T,\frac{1}{\det T}\r\}_{\partial}$.
\end{enumerate}
For any element $\wtilde L$ of the defining ideal of $\Kolt{A}$ over $\cA$,
there exists
$$
L(T_{0,0};T_{i,j},i,j=1,\dots,\nu)\in I \subset \cA\l\{T, \frac{1}{\det(T)}\r\}_{\partial},
$$
such that $L\in\cI_{\Kol{A}}$ and $\wtilde L =L(1;T_{i,j},i,j=1,\dots,\nu)$.
If $q$ is an algebraic number, other than a root of unity, or if $q$ is transcendental,
then for almost all places $v\in\cC$
we have
$$
\wtilde L(A_{\kappa_v})\equiv L(1, A_{\kappa_v})\equiv L(q^{\kappa_v},A_{\kappa_v})\equiv 0 \; \mbox{modulo} \; \phi_v.
$$
This shows that $\Kolt{A}$ is a differential subgroup of
$\GL_\nu(\C(x))$ which contains a nonempty cofinite set of
$v$-curvatures, in the sense of Theorem
\ref{thm:complexmodulesgendiffGalois}.
Therefore, $\Kolt{A}$
contains the parameterized generic Galois group of
$\cM_{\C(x)}^{(A)}$.
\end{proof}

\begin{defn}\label{defn:galt}
We call $\Galt{A}$ the $D$-groupoid on $M \times_\C M$ intersection of $\Gal{A}$ and $\cT rv$.
\end{defn}

It follows from the definition that the $D$-groupoid $\Galt{A}$ is generated by its global equations \ie by $\cL in$ and the image
of the equations of $\Kolt{A}$ by the morphism $\tau$.
Therefore we deduce from Theorem \ref{thm:clotkol} the following statement:

\begin{cor}\label{cor:galalg}
As a $D$-groupoid, $\Galt{A}$ is generated by its global sections, namely the
$D$-groupoid $\cL in$ and the image of the equations of $Gal^\partial(\cM^{(A)}_{\C(x)},\eta_{\C(x)})$
\emph{via} the morphism $\tau$.
\end{cor}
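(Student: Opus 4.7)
My plan is essentially bookkeeping: identify the global generators of $\Gal{A}$ and of $\cT rv$, use the fact that the intersection of two $D$-groupoids has defining $D$-ideal the sum of the two ideals of definition, and then invoke Theorem \ref{thm:clotkol} to reinterpret the result in terms of $Gal^\partial(\cM^{(A)}_{\C(x)},\eta_{\C(x)})$.

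By Definition \ref{defn:kol}, the $D$-groupoid $\Gal{A}$ is by construction the one on $J^*(M,M)$ generated by the global sections $\cI_{\cL in}$ and $\tau(I^\p_{\Kol{A}})$; so it is automatically generated by global equations. For $\cT rv$, consisting of the local diffeomorphisms that preserve the transversals to the projection $M\to\P^1_\C$, the defining condition is $\bar x = x$. In the jet coordinates used to define $\tau$, this reduces to the single global equation $\tau(T_{0,0}-1)=0$, the extra conditions $T_{0,j}=0$ for $j\geq 1$ being already built into $\cI_{\cL in}$. Hence $\cT rv$ is itself generated by one global section.

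Since $\Galt{A}=\Gal{A}\cap\cT rv$, its defining $D$-ideal is the $D$-ideal generated by $\cI_{\Gal{A}}+\cI_{\cT rv}$; by the previous step this is generated by the global sections $\cI_{\cL in}$, $\tau(I^\p_{\Kol{A}})$ and $\tau(T_{0,0}-1)$. But $\langle I_{\Kol{A}},T_{0,0}-1\rangle$ is precisely the defining ideal of $\Kolt{A}$ in $\C(x)\l\{T,\frac{1}{\det T}\r\}_\partial$ by Definition \ref{defn:kolt}, and by Theorem \ref{thm:clotkol} one has $\Kolt{A}\cong Gal^\partial(\cM^{(A)}_{\C(x)},\eta_{\C(x)})$. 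Combining these, the global generators of $\Galt{A}$ are exactly $\cL in$ together with $\tau$ applied to the equations cutting out $Gal^\partial(\cM^{(A)}_{\C(x)},\eta_{\C(x)})$, which is the claimed statement.

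The main point that requires care is the intersection principle for $D$-groupoids, namely the assertion that the defining $D$-ideal of $\Gal{A}\cap\cT rv$ really equals the $D$-ideal generated by $\cI_{\Gal{A}}+\cI_{\cT rv}$ with no further prolongation conditions being needed. This should follow from Malgrange's general framework (and is compatible with the convention recalled in the Warning above), but given that our generators are global and that $\cL in$ is already closed under the relevant prolongations, the verification is essentially direct.
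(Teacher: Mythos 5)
Your proposal is correct and follows essentially the same route as the paper, which likewise observes that $\Galt{A}=\Gal{A}\cap\cT rv$ is by construction generated by the global equations $\cI_{\cL in}$ together with $\tau$ applied to the defining ideal $\langle I_{\Kol{A}},T_{0,0}-1\rangle$ of $\Kolt{A}$, and then invokes Theorem \ref{thm:clotkol}. The paper compresses this into a single sentence; your only added care (the identification of the generator $\bar x - x$ of $\cT rv$ with $\tau(T_{0,0}-1)$ modulo $\cI_{\cL in}$, and the intersection-of-$D$-groupoids principle from Malgrange) is implicit in the paper's definitions and is handled correctly.
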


\begin{rmk}\label{rmk:galalg}
The corollary above says not only that a germ of diffeomorphism $(x,X)\mapsto(x,\be(x)X)$ of $M$ is solution of
$\Galt{A}$ if and only if $\be(x)$ is solution of the differential equations defining the
parameterized generic Galois group of $\cM^{(A)}_{\C(x)}=(\C(x)^\nu,X\mapsto A(x)^{-1}\sgq(X))$, but also that the
two differential defining ideals ``coincide''.
\end{rmk}

The $D$-groupoid  $\Galt{A}$ is a
differential analog of the $D$-groupoid generated by an algebraic group
introduced in \cite[Proposition 5.3.2]{MalgGGF} by B. Malgrange.

\subsection{The Galois $D$-groupoid $\Galan{A}$ of a linear $q$-difference system}

Since $Dyn(A(x))$ is contained in the solutions of $\Gal{A}$, we have
$$
sol(\Galan{A(x)})\subset sol(\Gal{A})
$$
and
$$
sol(\wtilde{\Galan{A(x)}})\subset sol(\Galt{A}).
$$
as already mentioned, the solution are to be found in some differential closure of
$\C(x),\partial$.

\begin{thm}\label{thm:Malgrangoide}
The solutions of the $D$-groupoid $\wtilde{\Galan{A(x)}}$ (resp. $\Galan{A(x)}$)
coincide with the solutions of $\Galt{A}$ (resp. $\Gal{A}$).
\end{thm}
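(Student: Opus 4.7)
The two inclusions $sol(\Galan{A(x)})\subset sol(\Gal{A})$ and $sol(\wtilde{\Galan{A(x)}})\subset sol(\Galt{A})$ have already been noted just before the theorem, since $Dyn(A(x))$ lies in the solutions of both sides and $\Galan{A(x)}$ is the $D$-envelope of $Dyn(A(x))$. My plan is to establish the reverse inclusion $sol(\Gal{A})\subset sol(\Galan{A(x)})$, equivalently $\cI_{\Galan{A(x)}}\subset\cI_{\Gal{A}}$; the statement for $\wtilde{\Galan{A(x)}}$ and $\Galt{A}$ will then follow by intersecting both $D$-groupoids with $\cT rv$.

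First I would observe that every element of $Dyn(A(x))$ is a linear diffeomorphism $(x,X)\mapsto(q^k x,A_k(x)X)$ and hence belongs to $sol(\cL in)$; by minimality of the $D$-envelope this forces $\Galan{A(x)}\subset\cL in$, i.e.\ $\cI_{\cL in}\subset\cI_{\Galan{A(x)}}$. Inside $\cL in$ every diffeomorphism is parametrized by a pair $(\alpha,\beta(x))\in\C^*\times\GL_\nu(\C(x))$, and $\tau$ is exactly the map translating such parameters into jet coordinates on $M\times_\C M$. This places $\Galan{A(x)}$ in the regime where comparison with a differential subvariety of $\GL_{\nu+1}(\C(x))$ becomes meaningful.

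The technical heart, and the main obstacle, will be to establish an analytic analogue of Corollary \ref{cor:galalg}: that $\cI_{\Galan{A(x)}}$ is the $D$-groupoid closure of $\langle\cI_{\cL in},\tau(J)\rangle$, where
\[
J:=\tau^{-1}\l(\cI_{\Galan{A(x)}}\cap H^0(M\times_\C M,\cO_{J^*(M,M)})\r)\subset\C[x]\l\{T,\frac{1}{\det T}\r\}_{\partial}
\]
collects the global equations of $\Galan{A(x)}$ that are pulled back from $\GL_{\nu+1}(\C(x))$. The intuition is that once one has factored through $\cL in$, any further $D$-subgroupoid constraint is a differential polynomial condition on the parameters $(\alpha,\beta(x))$, hence recoverable from global data on $\GL_{\nu+1}(\C(x))$. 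Making this rigorous at the level of sheaves of differential ideals on $J^*(M,M)$, in parallel to the algebraic argument behind Corollary \ref{cor:galalg} and in the spirit of Malgrange's treatment of linear $D$-groupoids in \cite{MalgGGF}, is the delicate step.

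Granting this global generation statement, let $V\subset\GL_{\nu+1}(\C(x))$ be the differential subvariety defined by $J$. The inclusion $Dyn(A(x))\subset sol(\Galan{A(x)})$ forces each matrix $diag(q^k,A_k(x))$ to lie in $V$, while by construction $\langle\cI_{\cL in},\tau(J)\rangle$ generates the $D$-groupoid $\Galan{A(x)}$; thus $V$ satisfies the two requirements of Definition \ref{defn:kol}, and minimality of $\Kol{A}$ yields $V\supset\Kol{A}$, equivalently $J\subset I_{\Kol{A}}^\p$. Since $D$-groupoid closure is monotone in the generating ideal, this upgrades to $\cI_{\Galan{A(x)}}\subset\cI_{\Gal{A}}$, the desired inclusion. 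Intersecting both $D$-groupoids with $\cT rv$ produces the analogous statement for $\wtilde{\Galan{A(x)}}$ and $\Galt{A}$, finishing the proof.
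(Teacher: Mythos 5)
Your reduction of the theorem to a single claim is structurally sound, and the endgame (minimality of $\Kol{A}$ applied to the differential subvariety of $\GL_{\nu+1}(\C(x))$ cut out by the global equations of $\Galan{A(x)}$) is essentially how the paper concludes. But the statement you label ``the technical heart'' and then grant --- that $\cI_{\Galan{A(x)}}$ is recovered, as far as solutions are concerned, from $\cI_{\cL in}$ together with \emph{global algebraic} equations pulled back through $\tau$ --- is precisely the content of the theorem, and your proposal offers no argument for it. The intuition that ``any further constraint is a differential polynomial condition on the parameters $(\alpha,\beta(x))$'' is not automatic: the equations of $\cI_{\Galan{A(x)}}$ are only locally defined holomorphic functions on $M\times_\C M$ and depend a priori on the fibre variables $X,\ol X$. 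The paper closes this gap with two concrete ingredients that are absent from your proposal. First, Lemma \ref{lemma:ev}: after reducing modulo $\cI_{\cL in}$ via Proposition \ref{LemmeReductionModLin}, one Taylor-expands each equation in $X$ and uses the stability of $\cI$ under the derivations $D_{X_i}$ to show that a linear germ $(x,X)\mapsto(\a x,\be(x)X)$ solves $\cI$ if and only if it solves the evaluation $ev(\cI)$ at $X=\ol X=\frac{\partial^i\ol X}{\partial x^i}=0$. Second, the evaluated ideal lives on $\P^1_\C\times\P^1_\C$, which is projective, so Serre's GAGA theorem \cite{Gagaser} applies to the coherent sheaves $ev(\iota^{-1}\cI_r)$ and shows that $ev(\cI)$ is generated by \emph{algebraic} differential equations admitting the dynamic as solutions. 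Only then can minimality of $\Kol{A}$ be invoked.

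A secondary point: you phrase the key claim at the level of defining ideals ($\cI_{\Galan{A(x)}}$ \emph{is} the $D$-groupoid closure of $\langle\cI_{\cL in},\tau(J)\rangle$). That is stronger than what the paper establishes and than what the theorem asserts; the paper explicitly remarks after Corollary \ref{cor:Malgrangoide} that it does not prove $\Galan{A}$ is an algebraic $D$-groupoid, only that the two $D$-groupoids have the same solutions. Your argument should be recast throughout in terms of solution sets rather than ideals.
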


Combining the theorem above with Corollary \ref{cor:galalg}, we immediately obtain:

\begin{cor}\label{cor:Malgrangoide}
The solutions of the $D$-groupoid $\wtilde{\Galan{A(x)}}$ are germs of diffeomorphisms
of the form $(x,X)\longmapsto (x, \be(x)X)$, such that $\be(x)$ is a solution of the differential equations
defining $Gal^\partial(\cM^{(A)}_{\C(x)},\eta_{\C(x)})$, and \emph{vice versa}.
\end{cor}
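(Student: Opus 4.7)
The plan is to chain together the two results that immediately precede the corollary. Theorem \ref{thm:Malgrangoide} identifies the solutions of $\wtilde{\Galan{A(x)}}$ with those of $\Galt{A}$, while Corollary \ref{cor:galalg} (\cf Remark \ref{rmk:galalg}) pins down the defining ideal of $\Galt{A}$: its global sections are exactly those of $\cL in$ together with the image under $\tau$ of the defining ideal of $\Kolt{A}$, and the two differential ideals ``coincide''. Since by Theorem \ref{thm:clotkol} one has $\Kolt{A}\cong Gal^{\partial}(\cM_{\C(x)}^{(A)},\eta_{\C(x)})$, it suffices to splice these three facts together.

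First I would exploit the inclusion $\Galt{A}\subset\cL in\cap\cT rv$ built into Definition \ref{defn:galt} to observe that any solution of $\Galt{A}$ must simultaneously fix the transversals $\{x=\mathrm{cst}\}$ and act fibrewise linearly in $X$. Hence such a solution necessarily has the shape $(x,X)\longmapsto (x,\be(x)X)$ for some $\be(x)\in\GL_{\nu}$ defined on a suitable open set. By Theorem \ref{thm:Malgrangoide} this already forces every solution of $\wtilde{\Galan{A(x)}}$ to have the same shape.

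Next I would invoke Corollary \ref{cor:galalg}: evaluating the image under $\tau$ of a differential polynomial $L(T_{i,j})\in I_{\Kolt{A}}$ on such a germ $(x,X)\mapsto(x,\be(x)X)$ amounts, by the very construction of $\tau$, to evaluating $L$ on the matrix $\be(x)$ and its successive $\partial$-derivatives. Consequently $(x,X)\mapsto(x,\be(x)X)$ is a solution of $\Galt{A}$ if and only if $\be(x)$ annihilates the defining ideal of $\Kolt{A}$, which by Theorem \ref{thm:clotkol} is precisely the defining ideal of $Gal^{\partial}(\cM_{\C(x)}^{(A)},\eta_{\C(x)})$. Combining this equivalence with the identification of solutions provided by Theorem \ref{thm:Malgrangoide} delivers both directions of the corollary.

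The hard mathematical content is entirely absorbed in Theorem \ref{thm:Malgrangoide} and Theorem \ref{thm:clotkol}; the only point that requires care here is bookkeeping, namely that ``solutions'' of these $D$-groupoids and of the associated defining ideals must be interpreted in a suitable differential extension of $(\C(x),\partial)$, as emphasized in Remark \ref{rmk:malgrange}, so that the translation via $\tau$ between germs of diffeomorphisms and points of the differential group is genuine and not just a statement about $\C(x)$-rational points.
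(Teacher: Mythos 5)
Your proposal is correct and follows essentially the same route as the paper, which obtains the corollary immediately by combining Theorem \ref{thm:Malgrangoide} with Corollary \ref{cor:galalg} (itself resting on Theorem \ref{thm:clotkol}). The extra bookkeeping you supply — the shape of the solutions forced by $\cL in$ and $\cT rv$, and the translation via $\tau$ between germs and points of $\Kolt{A}$ — is exactly the implicit content of the paper's one-line deduction.
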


\begin{rmk}
The corollary above says that the solutions of $\wtilde{\Galan{A}}$ in a neighborhood of a transversal $\{x_0\}\times\C^\nu$
(\cf Proposition \ref{prop:galt} below), rational over a differential extension $\cF$ of $\C(x)$,
correspond one-to-one with the solutions $\be(x)\in \GL_\nu(\cF)$ of the
differential equations defining the parameterized generic Galois group.
\par
It does not say that the two defining differential ideals can be compared.
We actually don't prove that $\Galan{A}$ is an ``algebraic $D$-groupoid'' and therefore that
$\Gal{A}$ and $\Galan{A}$ coincide as $D$-groupoids.
\end{rmk}

\begin{proof}[Proof of Theorem \ref{thm:Malgrangoide}]
Let $\cI $ be the differential ideal of $\Galan{A(x)}$ in $\cO_{J^*(M,M)}$ and let $\cI_r$ be the subideal of $\cI$ order $r$.
We consider the morphism of analytic varieties given by
$$
\begin{array}{rccc}
\iota :& \P^1_\C \times \P^1_\C &\longrightarrow & M \times_\C M \\~\\
   & \l(x, \ol{x}\r) &\longmapsto & \l(x, 0,\ol{x},0\r)
\end{array}\,
$$
and the inverse image $\cJ_r:=\iota^{-1}\cI_r$ (resp. $\cJ:=\iota^{-1}\cI$) of the sheaf $\cI_r$ (resp. $\cI$) over $\P^1_\C \times \P^1_\C $.
We consider similarly to \cite[Lemma 5.3.3]{MalgGGF}, the evaluation $ev(\iota^{-1}\cI)$
at $X=\ol X=\frac{\partial^i\ol X}{\partial x^i}=0$ of the
equations of $\iota^{-1}\cI$ and we denote by $ev(\cI)$ the direct
image by $\iota$ of the sheaf $ev(\iota^{-1}\cI)$.

The following lemma is crucial in the proof of the Theorem \ref{thm:Malgrangoide}:

\begin{lemma}
\label{lemma:ev}
A germs of local diffeomorphism $(x, X) \mapsto (\alpha x, \beta(x) X)$ of $M$ is solution of $\cI$ if and only if it is solution of $ev(\cI)$.
\end{lemma}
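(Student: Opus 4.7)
The forward direction is immediate: a germ satisfying every equation of $\cI$ at every jet of its domain in particular satisfies the restricted equations obtained by specializing to the zero section.

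For the converse, the plan is to exploit the fact that the sheaf of ideals $\cI$ of the $D$-groupoid $\Galan{A(x)}$ is closed under the source total derivatives $d_{X_j}$ on $\cO_{J^*(M,M)}$, $j=1,\dots,\nu$. This closure is precisely the prolongation property built into the definition of a $D$-groupoid recalled in the appendix. A direct verification shows that such total derivatives are compatible with evaluation of an equation on a germ $\psi$ of local diffeomorphism:
\[
(d_{X_j}P)(j_\psi(x,X))=\dfrac{\partial}{\partial X_j}\bigl[P(j_\psi(x,X))\bigr],
\]
where $j_\psi(x,X)$ denotes the jet of $\psi$ at $(x,X)$.

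Suppose now that $\phi\colon(x,X)\mapsto(\alpha x,\beta(x)X)$ is a solution of $ev(\cI)$. Because $\phi$ is linear in $X$, every jet coordinate of $\phi$ is polynomial in $X$: the coordinates $\bar X_i$ and their pure $x$-derivatives are linear in $X$ with coefficients in $x$, while coordinates such as $\bar x$, its $x$-derivatives, the partials $\tfrac{\partial \bar X_i}{\partial X_j}$, and their $x$-derivatives, do not depend on $X$ at all. Hence for any $P\in\cI$ the function $P(j_\phi(x,X))$ is a polynomial in $X$, entirely determined by its Taylor expansion at $X=0$. For each multi-index $I$, the element $d_X^{\,I}P$ still lies in $\cI$, so the hypothesis yields $(d_X^{\,I}P)(j_\phi(x,0))=0$; by the compatibility above, this equals $I!$ times the coefficient of $X^I$ in $P(j_\phi(x,X))$. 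All Taylor coefficients therefore vanish, $P(j_\phi(x,X))\equiv 0$, and $\phi$ is a solution of $\cI$. The only nontrivial input of the argument is the closure of $\cI$ under the $d_{X_j}$, which is exactly the prolongation property of a $D$-groupoid.
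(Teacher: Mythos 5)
Your proof is correct in substance and rests on the same mechanism as the paper's: stability of the differential ideal $\cI$ under the total derivatives in the source variables $X$, together with the observation that evaluating those derivatives at the zero section extracts the $X$-Taylor coefficients of $P(j_\phi(x,X))$. The organization, however, is genuinely different, and more direct. The paper first reduces an equation $E\in\cI_r$ modulo $\cI_{\cL in}$ (Proposition \ref{LemmeReductionModLin}) so that it depends only on $x$, $X$, $\frac{\partial\bar x}{\partial x}$, $\frac{\partial\bar X}{\partial X},\dots,\frac{\partial^r\bar X}{\partial x^{r-1}\partial X}$, and then runs an induction on the order of the Taylor coefficient $B_{\ul k}$, discarding at each step the Leibniz terms $E_\alpha(x,0)\,D_{X_i}(\partial^\alpha)$ on the grounds that $D_{X_i}(\partial^\alpha)\in\cI_{\cL in}$ and the germ is a solution of $\cL in$. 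You replace all of this by the single chain-rule identity $(d_{X_j}P)(j_\psi)=\frac{\partial}{\partial X_j}\bigl[P(j_\psi)\bigr]$, which is the defining property of the total derivative and holds for an arbitrary germ; this absorbs the Leibniz bookkeeping automatically and makes no use of $\cL in$ in the converse direction. Two points should be corrected or made explicit. First, your claim that $P(j_\phi(x,X))$ is a \emph{polynomial} in $X$ is false in general: equations of $\cO_{J^*(M,M)}$ depend holomorphically, not polynomially, on the source and target coordinates $X$ and $\bar X$, so the composite is merely holomorphic in $X$. The argument survives, since a holomorphic function all of whose $X$-Taylor coefficients vanish along the zero section vanishes in a neighbourhood of it, but one must then invoke analytic continuation to reach the base point $(x_a,X_a)$ of the germ when $X_a\neq 0$. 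Second, and relatedly, both your "immediate" forward direction and the evaluation of $(d_X^{\,I}P)(j_\phi(x,0))$ require the germ and the equations to be transported to a neighbourhood of the transversal $\{x\}\times\C^\nu$; this is exactly the continuation statement (cf. Proposition \ref{DefSolTransversale}) that the paper addresses explicitly in the first paragraph of its proof and that your write-up uses silently. With these two repairs your argument is a clean, and arguably leaner, alternative to the paper's.
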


\begin{proof}
First of all, we notice that $\cI$ is contained in $\cL in$. Moreover the solutions of
$\cI$, that are diffeomorphisms mapping a neighborhood of $(x_0,X_0)\in M$ to a neighborhood of $(\ol x_0,\ol X_0)$,
can be naturally continued to diffeomorphisms of a neighborhood of $x_0\times\C^\nu$ to a neighborhood of
$\ol x_0\times\C^\nu$.
Therefore it follows from the particular structure of the solutions of $\cL in$, that they are also solutions of $ev\l(\cI\r)$
(\cf Proposition \ref{prop:DgroupoideLin}).
\par
Conversely,
let the germ of diffeomorphism $\l(x, X\r) \mapsto \l(\alpha x, \beta\l(x\r) X\r)$ be a solution of $ev\l(\cI\r)$ and
$E \in \cI_r$.
It follows from Proposition \ref{LemmeReductionModLin}
that there exists $E_{1} \in \cI$ of order $r$,
only depending on the variables
$x$,$X$,$\frac{\partial \bar{x}}{\partial x}$,
$\frac{\partial \bar{X}}{\partial X}$,$\frac{\partial^{2} \bar{X}}{\partial x\partial X},\ldots$,
$\frac{\partial^{r}\bar{X}}{\partial x^{r-1}\partial X}$,
such that $\l(x, X\r) \mapsto \l(\alpha x, \beta\l(x\r) X\r)$ is solution of $E$ if and only if it is solution of $E_1$.
So we will focus on equations on the form $E_1$ and, to simplify notation, we will write $E$ for $E_1$.
\par
By assumption $\l(x, X\r) \mapsto \l(\alpha x, \beta\l(x\r) X\r)$ is solution of
$$
E\l( x,0,\frac{\partial \bar{x}}{\partial x},\frac{\partial \bar{X}}{\partial X},
\frac{\partial^{2} \bar{X}}{\partial x\partial X},\ldots \frac{\partial^{r}\bar{X}}{\partial x^{r-1}\partial X}\r)
$$
and we have to show that $\l(x, X\r) \mapsto \l(\alpha x, \beta\l(x\r) X\r)$ is a solution of $E$.
We consider the Taylor expansion of $E$:
$$
E\l( x,X,\frac{\partial \bar{x}}{\partial x},\frac{\partial \bar{X}}{\partial X},
\frac{\partial^{2} \bar{X}}{\partial x\partial X},\ldots \frac{\partial^{r}\bar{X}}{\partial x^{r-1}\partial X}\r)
=
\sum_{\alpha} E_{\alpha}\l(x, X\r) \partial^{\alpha},
$$
where $\partial^{\alpha}$ is a monomial in the coordinates $\frac{\partial \bar{x}}{\partial
x},\frac{\partial \bar{X}}{\partial X},\frac{\partial^{2} \bar{X}}{\partial x\partial X},\ldots \frac{\partial^{r}
\bar{X}}{\partial x^{r-1}\partial X}$.
Developing the $E_\a\l(x,X\r)$ with respect to $X=(X_1,\dots,X_\nu)$ we obtain:
$$
E = \sum \l(\sum_\alpha
\l(\frac{\partial^{\ul k} E_\alpha}{\partial X^{\ul k}}\r)\l(x,0\r) \partial ^\alpha \r) X^{\ul k},
$$
with $\ul k\in(\Z_{\geq 0})^\nu$.
If we show that for any $\ul k$ the germ $\l(x, X\r) \mapsto \l(\alpha x, \beta\l(x\r) X\r)$ verifies the equation
$$
B_{\ul k} := \sum_\alpha \l(\frac{\partial^{\ul k} E_\alpha}{\partial X^{\ul k}}\r) \l(x,0\r) \partial ^\alpha
$$
we can conclude.
For $\ul k=(0,\dots,0)$, there is nothing to prove since $B_{\ul 0}=ev\l(E\r)$.
\par
Let $D_{X_i}$ be the derivation of $\cI$ corresponding to $\frac{\partial}{\partial X_i}$,
The differential equation
$$
D_{X_i}\l(E\r) = \sum_\alpha \l(\frac{\partial E_\alpha}{\partial{X_i}}\r) \l(x,X\r) \partial ^\alpha + \sum_\alpha E_\alpha\l(x,X\r) D_{X_i}\l(\partial^\alpha\r)
$$
is still in $\cI$, since $\cI$ is a differential ideal.
Therefore by assumption $\l(x, X\r) \mapsto \l(\alpha x, \beta\l(x\r) X\r)$ is a solution of
$$
ev\l(D_{X_i}E\r)=
\sum_\alpha \l(\frac{\partial E_\alpha}{\partial{X_i}}\r) \l(x,0\r) \partial ^\alpha + \sum_\alpha E_\alpha\l(x,0\r) D_{X_i}\l(\partial^\alpha\r)
.
$$
Since $D_{X_i}\l(\partial^\alpha\r)\in\cL in$
and $\l(x, X\r) \mapsto \l(\alpha x, \beta\l(x\r) X\r)$ is a solution of $\cL in$,
we conclude that $\l(x, X\r) \mapsto \l(\alpha x, \beta\l(x\r) X\r)$ is a solution of
$$
\sum_\alpha \l(\frac{\partial E_\alpha}{\partial X}\r) \l(x,0\r) \partial ^\alpha
$$
and therefore of $B_1$.
Iterating the argument, one deduce that
$\l(x, X\r) \mapsto \l(\alpha x, \beta\l(x\r) X\r)$
is solution of $B_{\ul k}$ for any $\ul k\in(\Z_{\geq 0})^\nu$, which ends the proof of the lemma.
\end{proof}
We go back to the proof of Theorem \ref{thm:Malgrangoide}.
Lemma \ref{lemma:ev} proves that the solutions of $\Galan{A\l(x\r)}$ coincide with those
of the $D$-groupoid $\Gamma$ generated by $\cL in$ and $ev\l(\cI\r)$, defined on the open neighborhoods
of any $x_0\times\C^\nu\in M$. By intersection with the equation $\cT rv$, the same holds
for the transversal groupoids $\wtilde{\Galan{A\l(x\r)}}$ and $\wtilde{\Gamma}$.

Since $ \P^1_\C \times \P^1_\C $ and $M \times_\C M$ are locally compact and
$\cI_r$ is a coherent sheaf over $M \times_\C M$, the sheaf $\cJ_r$ is an analytic coherent sheaf over $\P^1_\C \times \P^1_\C $ and so is its quotient $ev(\iota^{-1}(\cI_{r}))$. By
\cite[Theorem 3]{Gagaser}, there exists
 an algebraic coherent sheaf
$\J_r$ over the projective variety $\P^1_\C \times \P^1_\C$ such that the analyzation of $\J_r$ coincides with $ev(\iota^{-1}(\cI_{r}))$. This implies that $ev\l(\cI\r)$ is generated by
algebraic differential equations
which by definition have the dynamic for solutions.

We thus have that the $sol(\Ga)=sol(\Galan{A})\subset sol(\Gal{A})$.
Since both $\Ga$ and $\Gal{A}$ are algebraic, the minimality of the variety
$\Kol{A}$ implies that
$sol(\Gal{A})\subset sol(\Gamma)$.
We conclude that the solutions of $\Galan{A}$ coincide with those $\Gal{A}$.
The same hold for $\wtilde{\Galan{A}}$, $\wtilde{\Gamma}$ and $\Galt{A}$).
This concludes the proof.
 \end{proof}

\subsection{Comparison with known results in \cite{MalgGGF} and \cite{GranierFourier}.}
\label{subsec:MalgrangeGranier}

In \cite{MalgGGF}, B. Malgrange proves that the Galois-$D$-groupoid of a linear differential equation
allows to recover, in the special case of a linear differential equation, the Picard-Vessiot Galois group over $\C$. This is not in contradiction with the result above, since:
\begin{itemize}
\item
due to the fact that local solutions of a linear differential equation form a $\C$-vector space
(rather than a vector space on the field of elliptic functions!), \cite[Proposition 4.1]{Katzbull} shows that
the generic Galois group and the Picard-Vessiot Galois group in the differential setting become isomorphic above a certain extension of the local ring.
For more details on the relation between the generic Galois group and the usual Galois group see
\cite[Corollary 3.3]{pillay}.

\item
it is not difficult to prove that, in the differential setting, the Picard-Vessiot
Galois group and parameterized Galois group with respect to $\frac{d}{dx}$ coincide.
\end{itemize}
Therefore B. Malgrange actually finds a parameterized generic Galois group, which is hidden in his construction.
The steps of the proof above are the same as in his proof, apart that, to compensate
the lack of good local solutions, we are obliged to use Theorem \ref{thm:diffgenGalois}.
Anyway, the application of Theorem \ref{thm:diffgenGalois} appears to be very natural, if one considers
how close the definition of the dynamic of a linear $q$-difference system and the definition of the curvatures are.

\medskip
In \cite{GranierFourier}, A. Granier shows that in the case of a $q$-difference system with constant coefficients
the groupoid that fixes the transversals in $\Galan{A}$ is the Picard-Vessiot Galois group, \ie an algebraic group defined over $\C$.
Once again, this is not in contradiction with our results.
In fact, under this assumption, it is not difficult to show that the parameterized generic Galois group is defined
over $\C$. Moreover the parameterized generic Galois groups and the generic Galois group coincide, in fact if $\cM$
is a $q$-difference module over $\C(x)$ associated with a constant $q$-difference system, it is easy to prove that
the prolongation functor $F_\partial$ acts trivially on $\cM$, namely $F_\partial(\cM)\cong \cM\oplus\cM$.
Finally, to conclude that the generic Galois group coincide with the usual one, it is enough to notice
that they are associated with the same fiber functor, or equivalently that they stabilize exactly the same objects.

\medskip
Because of these results, G. Casale and J. Roques have conjectured that ``for linear (q-)difference systems, the action of Malgrange
groupoid on the fibers gives the classical Galois groups'' (\cf \cite{casaleroques}).
In \emph{loc. cit.}, they give two proofs of their main integrability result:
one of the them relies on the conjecture.
Here we have proved that the Galois-$D$-groupoid allows to recover exactly the parameterized generic Galois group.
By taking the Zariski closure one can also recovers the algebraic generic Galois group.
The comparison theorems in \cite{diviziohardouinComp} (see also \cite[Part IV]{diviziohardouin})
imply that we can also recover the Picard-Vessiot Galois group
(\cf \cite{vdPutSingerDifference}, \cite{SauloyENS}),
performing a Zariski closure and a convenient field extension, and the parameterized Galois group (\cf \cite{HardouinSinger}),
performing a field extension.

\appendix


\section{The Galois $D$-groupoid of a $q$-difference system, by Anne Granier}
\label{sec:anDgroupoid}

We recall here the definition of the Galois $D$-groupoid of a $q$-difference system, and how to recover groups from it
in the case of a linear $q$-difference system. This appendix thus consists in a summary of Chapter 3 of
\cite{GranierThese}.

\subsection{Definitions}
\label{subsec:defmalgrange}

We need to recall first Malgrange's definition of $D$-groupoids, following \cite{MalgGGF} but specializing it to the
base space $\P^1_\mathbb{C} \times \mathbb{C}^\nu$ as in \cite{GranierThese} and \cite{GranierFourier}, and to explain
how it allows to define a Galois $D$-groupoid for $q$-difference systems.\\

Fix $\nu\in \mathbb{N}^*$, and denote by $M$ the analytic complex variety $\P^1_\mathbb{C} \times \mathbb{C}^\nu$. We call
\textit{local diffeomorphism of $M$} any biholomorphism between two open sets of $M$, and we denote by $Aut(M)$ the
set of germs of local diffeomorphisms of $M$. Essentially, a $D$-groupoid is a subgroupoid of
$Aut(M)$ defined by a system of partial differential equations.\\

Let us precise what is the object which represents the system of partial differential equations in this rough
definition.

A germ of a local diffeomorphism of $M$ is determined by the coordinates denoted by $(x,X)=(x,X_1,\ldots , X_\nu)$ of
its source point, the coordinates denoted by $(\bar{x},\bar{X})=(\bar{x},\bar{X}_1,\ldots , \bar{X}_\nu)$ of its target
point, and the coordinates denoted by $\frac{\partial \bar{x}}{\partial x},\frac{\partial \bar{x}}{\partial X_1},
\ldots ,\frac{\partial \bar{X}_1}{\partial x},\ldots ,\frac{\partial^2 \bar{x}}{\partial x^2}, \ldots$ which represent
its partial derivatives evaluated at the source point. We also denote by $\delta$ the polynomial in the coordinates above,
which represents the
Jacobian of a germ evaluated at the source point. We will allow us abbreviations for some sets of these coordinates,
as for example $\frac{\partial \bar{X}}{\partial X}$ to represent all the coordinates
$\frac{\partial\bar{X}_i}{\partial X_j}$ and $\partial\bar X$ for all the coordinates
$\frac{\partial\bar{X}_i}{\partial x_j}$,
$\frac{\partial\bar{X}_i}{\partial\bar x_j}$,
$\frac{\partial\bar{X}_i}{\partial X_j}$ and
$\frac{\partial\bar{X}_i}{\partial\bar X_j}$.

We denote by $r$ any positive integer. We call \textit{partial differential equation}, or only \textit{equation}, of
order $\leq r$ any fonction $E(x,X,\bar{x},\bar{X},\partial \bar{x},\partial \bar{X}, \ldots ,\partial^r
\bar{x},\partial^r \bar{X})$ which locally and holomorphically depends on the source and target coordinates, and
polynomially on $\delta^{-1}$ and on the partial derivative coordinates of order $\leq r$. These equations are
endowed with a sheaf structure on $M \times M$ which we denote by $\mathcal{O}_{J^*_r(M,M)}$. We then denote by
$\mathcal{O}_{J^{*}(M,M)}$ the sheaf of all the equations, that is the direct limit of the sheaves
$\mathcal{O}_{J_{r}^{*}(M,M)} $. It is endowed with natural derivations of the equations with respect to the source
coordinates. For example, one has: $D_{x}.\frac{\partial \bar{X}_{i}}{\partial X_j}=\frac{\partial ^{2}
\bar{X}_{i}}{\partial x \partial X_j}$.

We will consider the pseudo-coherent (in the sense of \cite{MalgGGF}) and differential ideal
\footnote{We will say everywhere differential ideal for sheaf of differential ideal.} $\mathcal{I}$ of
$\mathcal{O}_{J^{*}(M,M)}$ as the systems of partial differential equations in the definition of $D$-groupoid. A
\textit{solution} of such an ideal $\mathcal{I}$ is a germ of a local diffeomorphism $g : (M,a) \rightarrow (M,g(a))$ such that, for any equation $E$ of the fiber $\mathcal{I}_{(a,g(a))}$, the function defined by $(x,X) \mapsto
E((x,X),g(x,X),\partial g(x,X),\ldots )$ is null in a neighbourhood of $a$ in $M$. The set of solutions of
$\mathcal{I}$ is denoted by $sol(\mathcal{I})$.\\

The set $Aut(M)$ is endowed with a groupoid structure for the composition $c$ and the inversion $i$ of the germs of
local diffeomorphisms of $M$. We thus have to characterize, with the comorphisms $c^*$ and $i^*$ defined on
$\mathcal{O}_{J^{*}(M,M)}$, the systems of partial differential equations $\mathcal{I} \subset
\mathcal{O}_{J^{*}(M,M)}$ whose set of solutions $sol(\mathcal{I})$ is a subgroupoid of $Aut(M)$.

We call \textit{groupoid of order $r$} on $M$ the subvariety of the space of invertible jets of order $r$ defined by
a coherent ideal $\mathcal{I}_{r} \subset
\mathcal{O}_{J_{r}^{*}(M,M)}$ such that \textit{(i)}: all the germs of the identity map of $M$ are solutions of
$\mathcal{I}_{r}$, such that \textit{(ii)}: $c^{*}(\mathcal{I}_{r}) \subset \mathcal{I}_{r} \otimes
\mathcal{O}_{J_{r}^{*}(M,M)} + \mathcal{O}_{J_{r}^{*}(M,M)} \otimes \mathcal{I}_{r}$, and such that \textit{(iii)}:
$\iota ^{*}(\mathcal{I}_{r}) \subset \mathcal{I}_{r}$. The solutions of such an ideal $\mathcal{I}_{r}$ form a
subgroupoid of $Aut(M)$.

\begin{defn}\label{defn:Dgroupoid}
According to \cite{MalgGGF}, a \textit{$D$-groupoid} $\cG$ on $M$ is a
subvariety of the space $(M^2,\cO_{J^*(M,M)})$ of invertible jets defined by a reduced, pseudo-coherent and differential ideal
$\mathcal{I}_\cG \subset \mathcal{O}_{J^{*}(M,M)}$ such that
\begin{itemize}

\item[\textit{(i')}]
all the germs of the identity map of $M$ are solutions of $\mathcal{I}_\cG$,

\item[\textit{(ii')}]
for any relatively compact open set $U$ of $M$, there exists a closed complex analytic subvariety $Z$ of $U$ of codimension $\geq 1$, and a positive integer $r_{0} \in \mathbb{N}$ such that, for all $r \geq r_{0}$
and denoting by $\mathcal{I}_{\cG,r}= \mathcal{I}_\cG \cap \mathcal{O}_{J_{r}^{*}(M,M)}$,
one has, above $(U \setminus Z)^{2}$: $c^{*}(\mathcal{I}_{\cG,r}) \subset \mathcal{I}_{\cG,r} \otimes
\mathcal{O}_{J_{r}^{*}(M,M)} + \mathcal{O}_{J_{r}^{*}(M,M)} \otimes \mathcal{I}_{\cG,r}$,

\item[\textit{(iii')}]
$\iota ^{*}(\mathcal{I}_\cG) \subset \mathcal{I}_\cG$.
\end{itemize}
\end{defn}

The ideal $\cI_\cG$ totally determines the $D$-groupoid $\cG$, so we will rather focus on the ideal $\cI_\cG$ than its solution $sol(\cI_\cG)$ in $Aut(M)$.
Thanks to the analytic continuation theorem, $sol(\mathcal{I}_\cG)$ is a subgroupoid of $Aut(M)$.\\

The flexibility introduced by Malgrange in his definition of $D$-groupoid allows him to obtain two main results.
Theorem 4.4.1 of \cite{MalgGGF} states that the reduced differential ideal of $\mathcal{O}_{J^{*}(M,M)}$ generated by
a coherent ideal $\mathcal{I}_{r} \subset \mathcal{O}_{J_{r}^{*}(M,M)}$ which satisfies the previous conditions
\textit{(i)},\textit{(ii)}, and \textit{(iii)} defines a $D$-groupoid on $M$. Theorem 4.5.1 of \cite{MalgGGF} states
that for any family of $D$-groupoids on $M$ defined by a family of ideals $\{ \mathcal{G}^{i} \} _{i \in I}$, the ideal $\sqrt{\sum \mathcal{G}^{i}}$ defines a $D$-groupoid on $M$ called \textit{intersection}. The terminology is legitimated by the equality: $sol(\sqrt{\sum \mathcal{G}^{i}}) = \cap_{i \in I} sol(\mathcal{G}^{i})$. This last result allows to define the notion of $D$-envelope of any subgroupoid of $Aut(M)$.\\

Fix $q \in \mathbb{C}^*$, and let $Y(qx)=F(x,Y(x))$ be a (non linear) $q$-difference system, with $F(x,X) \in \mathbb{C}(x,X)^{\nu}$. Consider the set subgroupoid of $Aut(M)$ generated by the germs of the application $(x,X) \mapsto (qx,F(x,X))$ at any point of $M$ where it is well defined and invertible, and denote it by $Dyn(F)$. The Galois $D$-groupoid, also called the Malgrange-Granier groupoid in \S\ref{sec:malgrangegranieralg}, of the $q$-difference system $Y(qx)=F(x,Y(x))$ is the $D$-enveloppe of $Dyn(F)$, that is the \textit{intersection} of the $D$-groupoids on $M$ whose
set of solutions contains $Dyn(F)$.

\subsection{A bound for the Galois $D$-groupoid of a linear $q$-difference system}
\label{subsec:anagrouplin}

For all the following, consider a rational linear $q$-difference system $Y(qx)=A(x)Y(x)$, with $A(x) \in GL_{\nu}(\mathbb{C}(x))$. We denote by $\mathcal{G}al(A(x))$ the Galois $D$-groupoid of this system as defined at the end
of the previous section \ref{subsec:defmalgrange}, we denote by $\mathcal{I}_{\mathcal{G}al(A(x))}$ its defining ideal of equations, and by $sol(\mathcal{G}al(A(x)))$ its groupoid of solutions.\\

The elements of the dynamic $Dyn(A(x))$ of $Y(qx)=A(x)Y(x)$ are the germs of the local diffeomorphisms of $M$ of the form $(x,X) \mapsto (q^kx,A_k(x)X)$, with: $$A_{k}(x)= \left\lbrace \begin{array}{ll}  Id_{n} & \text{if } k=0,\\ \prod_{i=0}^{k-1} A(q^{i}x) & \text{if } k \in \mathbb{N}^{*},\\  \prod_{i=k}^{-1}A(q^{i}x)^{-1} & \text{if } k \in -\mathbb{N}^{*}.
\end{array} \right.$$
The first component of these diffeomorphisms is independent on the variables $X$ and depends linearly on the variable $x$, and the second component depends linearly on the variables $X$. These properties can be expressed in terms of partial differential equations. This gives an \textit{upper bound} for the Galois $D$-groupoid $\mathcal{G}al(A(x))$ which is defined in the following proposition.

\begin{prop}
\label{prop:DgroupoideLin}
The coherent ideal:
$$\left\langle \frac{\partial \bar{x}}{\partial X} , \frac{\partial \bar{x}}{\partial x}x-\bar{x} ,
\partial ^{2}\bar{x}, \frac{\partial \bar{X}}{\partial X}X-\bar{X} , \frac{\partial ^{2} \bar{X}}{\partial X^{2}}
\right\rangle \subset \mathcal{O}_{J^*_2(M,M)}$$
satisfies the conditions \textit{(i)},\textit{(ii)}, and \textit{(iii)} of \ref{subsec:defmalgrange}. Hence, thanks to Theorem 4.4.1 of \cite{MalgGGF}, the reduced differential ideal $\mathcal{I}_{\mathcal{L}in}$ it generates defines a $D$-groupoid $\mathcal{L}in$. Its solutions $sol(\mathcal{L}in)$ are the germs of the local diffeomorphisms of $M$ of the form: $$(x,X) \mapsto (\alpha x,\beta (x)X),$$ with $\alpha \in \mathbb{C}^*$ and locally, $\beta (x) \in GL_\nu(\mathbb{C})$ for all $x$.\\ They contain $Dyn(A(x))$, and therefore, given the definition of $\mathcal{G}al(A(x))$, one has the inclusion $$\mathcal{G}al(A(x)) \subset \mathcal{L}in,$$ which means that:
$$\mathcal{I}_{\mathcal{L}in} \subset \mathcal{I}_{\mathcal{G}al(A(x))} \, \, \text{ and } \, \,
sol(\mathcal{G}al(A(x))) \subset sol(\mathcal{L}in).$$
\end{prop}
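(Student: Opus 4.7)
The plan is to verify that the coherent order-$2$ ideal
$\mathcal{I}_{2} := \langle \partial\bar{x}/\partial X,\, (\partial\bar{x}/\partial x)\,x - \bar{x},\, \partial^{2}\bar{x},\, (\partial\bar{X}/\partial X)\,X - \bar{X},\, \partial^{2}\bar{X}/\partial X^{2}\rangle$
satisfies Malgrange's conditions (i), (ii), (iii) for a groupoid of order $2$ as recalled in \S\ref{subsec:defmalgrange}, then to invoke Theorem~4.4.1 of \cite{MalgGGF} to promote the reduced differential ideal it generates to the defining ideal of a $D$-groupoid $\cL in$. Finally, I will integrate the PDEs to identify $sol(\cL in)$ and deduce that $Dyn(A(x)) \subset sol(\cL in)$, whence $\Galan{A(x)} \subset \cL in$ by minimality of the $D$-envelope.

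Condition (i) is a direct plug-in: for the identity $(\bar x, \bar X) = (x, X)$ each generator evaluates to $0$. For (iii), I would observe that germs of the form $(x,X)\mapsto(\alpha x,\beta(x)X)$ are closed under inversion, the inverse being $(\bar x, \bar X)\mapsto(\alpha^{-1}\bar x,\beta(\alpha^{-1}\bar x)^{-1}\bar X)$; at the ideal level this translates into verifying generator by generator that $\iota^{*}$ maps each element of $\mathcal{I}_{2}$ into itself, using the chain rule and the standard formula for the Jacobian of the inverse.

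The main obstacle will be (ii), the composition compatibility. Writing $(x,X)$, $(\bar x, \bar X)$, $(\bar{\bar x}, \bar{\bar X})$ for the source, intermediate and target coordinates of a composition, the chain rule applied under $c^{*}$ to the generator $\partial\bar{x}/\partial X_{j}$ gives
\[
\frac{\partial \bar{\bar x}}{\partial X_{j}} \;=\; \frac{\partial \bar{\bar x}}{\partial \bar x}\,\frac{\partial \bar x}{\partial X_{j}} \;+\; \sum_{i} \frac{\partial \bar{\bar x}}{\partial \bar X_{i}}\,\frac{\partial \bar X_{i}}{\partial X_{j}},
\]
whose first summand lies in $\cO_{J^{*}_{2}(M,M)}\otimes \mathcal{I}_{2}$ (via the first map) and whose second lies in $\mathcal{I}_{2}\otimes \cO_{J^{*}_{2}(M,M)}$ (via the second). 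The Euler-type generator $(\partial\bar{x}/\partial x)\,x - \bar x$ is handled analogously: expanding $\partial\bar{\bar x}/\partial x$ by the chain rule and using the identity $(\partial\bar x/\partial x)x \equiv \bar x \bmod \mathcal{I}_{2}$ produces $(\partial\bar{\bar x}/\partial\bar x)\bar x - \bar{\bar x}$, itself a pulled-back generator, plus explicit elements of $\mathcal{I}_{2}\otimes \cO_{J^{*}_{2}(M,M)}$. The same strategy dispatches $(\partial\bar X/\partial X)X - \bar X$, and the two second-derivative generators either follow by direct chain-rule computation or by differentiating the first-order identities already verified; this is the bulk of the computational work.

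Once (i)--(iii) are established, Theorem~4.4.1 of \cite{MalgGGF} yields the $D$-groupoid $\cL in$. To describe $sol(\cL in)$, I integrate the defining PDEs: $\partial\bar x/\partial X = 0$ forces $\bar x = \bar x(x)$, and combined with $x\,d\bar x/dx = \bar x$ (plus $d^{2}\bar x/dx^{2} = 0$) this gives $\bar x = \alpha x$ for some constant $\alpha$. Similarly $\partial^{2}\bar X/\partial X^{2} = 0$ makes $\bar X$ affine in $X$, and $(\partial\bar X/\partial X)X - \bar X = 0$ kills the constant term, whence $\bar X = \beta(x)X$. Invertibility of the germ forces $\alpha\in\mathbb{C}^{*}$ and $\beta(x)\in GL_{\nu}(\mathbb{C})$ pointwise. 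Since every element of $Dyn(A(x))$ is of the form $(x,X)\mapsto(q^{k}x, A_{k}(x)X)$, it lies in $sol(\cL in)$, and the desired inclusion $\Galan{A(x)} \subset \cL in$ follows from the minimality characterization of the Galois $D$-groupoid as the $D$-envelope of $Dyn(A(x))$.
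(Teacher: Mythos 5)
Your proposal is correct and follows the expected route: the paper itself gives no details here, deferring entirely to Proposition 3.2.1 of Granier's thesis \cite{GranierThese}, and the verification you outline --- checking (i) by direct evaluation on the identity, (ii) and (iii) generator by generator via the chain rule and the adjugate formula for the inverse Jacobian, invoking Theorem 4.4.1 of \cite{MalgGGF}, and then integrating the PDEs to get $\bar x=\alpha x$, $\bar X=\beta(x)X$ --- is precisely the standard argument carried out there. The only point worth being careful about is which tensor factor of $\mathcal{I}_{r}\otimes\mathcal{O}+\mathcal{O}\otimes\mathcal{I}_{r}$ corresponds to which leg of the composition, but this is purely a matter of convention and does not affect the argument.
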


\begin{proof}
\textit{cf} proof of Proposition 3.2.1 of \cite{GranierThese} for more details.
\end{proof}

\begin{rmk}
Given their shape, the solutions of $\mathcal{L}in$ are naturally defined in neighborhoods of transversals $\left\lbrace x_a \right\rbrace \times \mathbb{C}^{\nu}$ of $M$. Actually, consider a particular element of $sol(\mathcal{L}in)$, that is precisely a germ at a point $(x_a,X_a) \in M$ of a local diffeomorphism $g$ of $M$ of the form $(x,X) \mapsto (\alpha x,\beta (x)X)$. Consider then a neighborhood $\Delta$ of $x_a$ in $P^1\mathbb{C}$ where the matrix $\beta (x)$ is well defined and invertible, consider the "cylinders" $T_s=\Delta \times \mathbb{C}^{\nu}$ and $T_t=\alpha \Delta \times \mathbb{C}^{\nu}$ of $M$, and the diffeomorphism $\tilde{g} : T_s \rightarrow T_t$ well defined by $(x,X) \rightarrow (\alpha x,\beta (x)X)$. Therefore, according to the previous Proposition \ref{prop:DgroupoideLin}, all the germs of $\tilde{g}$ at the points of $T_s$ are in $sol(\mathcal{L}in)$ too.
\end{rmk}

The defining ideal $\mathcal{I}_{\mathcal{L}in}$ of the bound $\mathcal{L}in$ is generated by very simple equations. This allows to reduce modulo $\mathcal{I}_{\mathcal{L}in}$ the equations of $\mathcal{I}_{\mathcal{G}al(A(x))}$ and obtain some
simpler representative equations, in the sense that they only depend on some variables.

\begin{prop}
\label{LemmeReductionModLin}
Let $r \geq 2$. For any equation $E \in \mathcal{I}_{\mathcal{G}al(A(x))}$ of order $r$, there
exists an invertible element $u \in \mathcal{O}_{J^{*}_r(M,M)}$, an equation $L \in
\mathcal{I}_{\mathcal{L}in}$ of order $r$, and an equation $E_{1} \in \mathcal{I}_{\mathcal{G}al(A(x))}$ of order $r$
only depending on the variables written below, such that:
$$
uE=L+E_{1}
\l(x,X,\frac{\partial \bar{x}}{\partial
x},\frac{\partial \bar{X}}{\partial X},\frac{\partial^{2} \bar{X}}{\partial x\partial X},\ldots \frac{\partial^{r}
\bar{X}}{\partial x^{r-1}\partial X}\r).
$$
\end{prop}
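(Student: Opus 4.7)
The plan is to prove this by an explicit reduction algorithm modulo $\mathcal{I}_{\mathcal{L}in}$, guided by the generators listed in Proposition \ref{prop:DgroupoideLin} and their formal source-derivatives of total order at most $r$. From those generators one extracts the following elementary congruences modulo $\mathcal{I}_{\mathcal{L}in}$, each witnessed by an element of $\mathcal{I}_{\mathcal{L}in}\cap\mathcal{O}_{J^{*}_r(M,M)}$: first, $\bar{x}\equiv x\,\frac{\partial \bar{x}}{\partial x}$, and every derivative of $\bar{x}$ of total order $\geq 2$ or involving a derivation in some $X_j$ is congruent to $0$; second, $\bar{X}_i\equiv \sum_j \frac{\partial \bar{X}_i}{\partial X_j}\,X_j$, and every derivative of $\bar{X}_i$ involving at least two $X$-derivations is congruent to $0$; and finally, for each $1\leq k\leq r-1$, $\frac{\partial^k \bar{X}_i}{\partial x^k}\equiv\sum_j\frac{\partial^{k+1} \bar{X}_i}{\partial x^k\partial X_j}\,X_j$, the witness being the $k$-th formal $D_x$-derivative of $\frac{\partial \bar{X}}{\partial X}X-\bar{X}$, which has order $k+1\leq r$.

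Next I would apply these congruences to $E$ in a carefully chosen order: first the ones producing $0$, which kill all forbidden mixed derivatives of $\bar{x}$ and of $\bar{X}$ in the $X$-directions; then the substitutions $\bar{x}\mapsto x\,\frac{\partial \bar{x}}{\partial x}$ and $\bar{X}_i\mapsto\sum_j \frac{\partial \bar{X}_i}{\partial X_j}X_j$; and finally the last family, applied for $k=r-1$ down to $k=1$, to replace each pure $x$-derivative $\frac{\partial^k \bar{X}_i}{\partial x^k}$ by its image. Each single step contributes to $L$ an element of $\mathcal{I}_{\mathcal{L}in}\cap\mathcal{O}_{J^{*}_r(M,M)}$, so the accumulated $L$ stays in that intersection, and the remainder $E_1$ depends only on the listed variables. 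Since $\mathcal{I}_{\mathcal{L}in}\subset\mathcal{I}_{\mathcal{G}al(A(x))}$ (by $\mathcal{G}al(A(x))\subset\mathcal{L}in$) and $E\in\mathcal{I}_{\mathcal{G}al(A(x))}$, the identity $uE=L+E_1$ forces $E_1\in\mathcal{I}_{\mathcal{G}al(A(x))}$ as well.

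The invertible factor $u$ enters because $\mathcal{O}_{J^{*}_r(M,M)}$ is a ring of polynomials in $\delta^{-1}$, with $\delta$ the global Jacobian $\det\partial(\bar{x},\bar{X})/\partial(x,X)$; after the reductions above, $\delta$ factors modulo $\mathcal{I}_{\mathcal{L}in}$ as $\frac{\partial \bar{x}}{\partial x}\cdot\det\frac{\partial \bar{X}}{\partial X}$, and any denominator introduced by the substitutions is cleared by multiplying $E$ by a suitable positive power of $\delta$, which is a unit of $\mathcal{O}_{J^{*}_r(M,M)}$.

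The main obstacle is the order bookkeeping during the chain of substitutions: one must verify at each step that no forbidden variable is reintroduced and that the piece added to $L$ has order at most $r$. This is precisely why the last family of congruences is used only for $k\leq r-1$, and why the allowed list $x,X,\frac{\partial \bar{x}}{\partial x},\frac{\partial \bar{X}}{\partial X},\frac{\partial^2 \bar{X}}{\partial x\partial X},\ldots,\frac{\partial^r \bar{X}}{\partial x^{r-1}\partial X}$ is precisely tailored to absorb the order-shift that each application of $\frac{\partial^k \bar{X}_i}{\partial x^k}\equiv\sum_j\frac{\partial^{k+1} \bar{X}_i}{\partial x^k\partial X_j}X_j$ introduces.
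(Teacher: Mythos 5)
Your reduction is correct and is essentially the argument the paper intends: its own proof is a two-line sketch (take $u$ to be a suitable power of $\delta$ and divide $uE$, then its successive remainders, by the generators of $\mathcal{I}_{\mathcal{L}in}$, deferring details to Proposition 3.2.3 of Granier's thesis), and your explicit congruences modulo $\mathcal{I}_{\mathcal{L}in}$ together with the order bookkeeping are precisely those divisions carried out. Nothing further is needed.
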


\begin{proof}
The invertible element $u$ is a good power of $\delta$. The proof consists then in performing the divisions of
the equation $uE$, and then its succesive remainders, by the generators of $\mathcal{I}_{\mathcal{L}in}$.
More details are given in the proof of Proposition 3.2.3 of \cite{GranierThese}.
\end{proof}

\subsection{Groups from the Galois $D$-groupoid of a linear $q$-difference system}

We are going to prove that the solutions of the Galois $D$-groupoid $\mathcal{G}al(A(x))$ are, like the solutions of the bound $\mathcal{L}in$, naturally defined in neighbourhoods of transversals of $M$. This property, together with the
groupoid structure of $sol(\mathcal{G}al(A(x)))$, allows to exhibit groups from the solutions of $\mathcal{G}al(A(x))$
which fix the transversals.\\

According to Proposition \ref{prop:DgroupoideLin}, an element of $sol(\mathcal{G}al(A(x)))$ is also an element of $sol(\mathcal{L}in)$. Therefore, it is a germ at a point $a=(x_a,X_a) \in M$ of a local diffeomorphism $g :(M,a)
\rightarrow (M,g(a))$ of the form $(x,X) \mapsto (\alpha x,\beta (x)X)$, such that, for any equation $E
\in \mathcal{I}_{\mathcal{G}al(A(x))}$, one has $E((x,X),g(x,X),\partial g(x,X),\ldots )=0$ in a neighbourhood
of $a$ in $M$.

Consider an open connected neighbourhood $\Delta$ of $x_a$ in $\P^1_\mathbb{C}$ on which the matrix $\beta$ is
well-defined and invertible, that is where $\beta$ can be prolongated in a matrix $\beta \in GL_\nu(
\mathcal{O}(\Delta))$. Consider the "cylinders" $T_s=\Delta \times \mathbb{C}^\nu$ and $T_t=\alpha \Delta \times \mathbb{C}^\nu$ of $M$, and the
diffeomorphism $\tilde{g} : T_s \rightarrow T_t$ defined by $(x,X) \rightarrow (\alpha x,\beta (x)X)$.

\begin{prop}
\label{DefSolTransversale}
The germs at all points of $T_s$ of the diffeomorphism $\tilde{g}$ are elements of $sol(\mathcal{G}al(A(x)))$.
\end{prop}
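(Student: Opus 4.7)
The plan is to prove that the set
\[
S := \{b \in T_s : \text{the germ of } \tilde{g} \text{ at } b \text{ lies in } sol(\mathcal{G}al(A(x)))\}
\]
coincides with the connected space $T_s = \Delta \times \C^\nu$. Since $a \in S$ by hypothesis, it is enough to show that $S$ is both open and closed in $T_s$. To avoid handling all differential orders at once, I would decompose $S = \bigcap_{r \in \N} S_r$, where
\[
S_r := \l\{b \in T_s : E(\tilde{g}) = 0 \text{ as a germ at } b,\ \forall\, E \in \l(\cI_{\mathcal{G}al(A(x))} \cap \cO_{J^*_r(M,M)}\r)_{(b, \tilde{g}(b))} \r\},
\]
and show separately that each $S_r$ is open and closed, hence equal to $T_s$.

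The decisive reduction is Proposition \ref{LemmeReductionModLin}: any order-$r$ equation $E$ in the stalk at $(b, \tilde{g}(b))$ can be written $uE = L + E_1$ with $u$ invertible, $L \in \cI_{\cL in}$, and $E_1$ depending only on $x, X$ and the jets $\partial \bar x / \partial x, \partial \bar X/\partial X, \partial^2 \bar X/\partial x \partial X, \ldots, \partial^r \bar X/\partial x^{r-1}\partial X$. Because $\tilde{g}$ is a solution of $\cL in$ on all of $T_s$ by Proposition \ref{prop:DgroupoideLin} and the ensuing remark, $L(\tilde{g})$ vanishes identically, so $E(\tilde g)=0$ as a germ at $b$ is equivalent to $E_1(\tilde{g})=0$ as a germ at $b$. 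The jets of $\tilde g$ occurring in $E_1$ are $\alpha, \beta(x), \beta'(x), \ldots, \beta^{(r-1)}(x)$, all holomorphic on the connected open $\Delta$ and independent of $X$, so $E_1(\tilde g)$ is a function of $(x,X)$ which is holomorphic on an open neighborhood of $b$ in $T_s$ and involves no target coordinate. This places the problem squarely in the domain of the identity principle.

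For the openness of $S_r$ at a point $b \in S_r$, I would invoke the pseudo-coherence of $\cI_{\mathcal{G}al(A(x))}$: the sheaf $\cI_{\mathcal{G}al(A(x))} \cap \cO_{J^*_r(M,M)}$ has finitely many local generators $E_1, \ldots, E_N$ on some neighborhood $U \times V$ of $(b, \tilde g(b))$ which control its stalks everywhere on $U \times V$. Reducing each $E_i$ via Proposition \ref{LemmeReductionModLin} and evaluating on $\tilde g$, one gets holomorphic functions $f_i$ on $U \cap T_s$ that vanish as germs at $b$ (since $b \in S_r$), hence on a common connected open neighborhood $W$ of $b$. For any $b' \in W$ small enough that $(b', \tilde g(b')) \in U \times V$, every order-$r$ germ at $(b', \tilde g(b'))$ is an $\cO$-combination of the $E_i$'s, so its evaluation on $\tilde g$ vanishes on $W$ and in particular as a germ at $b'$; this gives a neighborhood of $b$ in $S_r$. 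For closedness, if $b \in \overline{S_r}$ and $E$ is an order-$r$ germ at $(b, \tilde g(b))$, the reduction produces an $E_1$ whose evaluation $E_1(\tilde g)$ is holomorphic on a connected neighborhood $W'$ of $b$. Since $S_r$ is open and $b \in \overline{S_r}$, $W' \cap S_r$ is a nonempty open subset of $W'$; at any $b^* \in W' \cap S_r$, $E_1$ still represents an order-$r$ germ of $\cI_{\mathcal{G}al(A(x))}$, so $E_1(\tilde g)$ vanishes near $b^*$, and the identity principle forces $E_1(\tilde g) \equiv 0$ on $W'$, hence as a germ at $b$.

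The main obstacle is the mismatch between the global condition I want to establish on $T_s$ and the purely local nature of sections of $\cI_{\mathcal{G}al(A(x))}$, which live on open subsets of $M \times M$ and a priori genuinely involve the target coordinates $(\bar x, \bar X)$. Proposition \ref{LemmeReductionModLin} is the bridge: by removing the target variables from the equations that matter, it reformulates the vanishing condition as vanishing of holomorphic functions on the connected space $T_s$, where the identity principle can do its work. The pseudo-coherence of $\cI_{\mathcal{G}al(A(x))}$ is what then allows one to reduce, for each order $r$, to finitely many such functions, so that the open/closed bootstrap actually runs.
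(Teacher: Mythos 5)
Your proof is correct and rests on essentially the same mechanism as the paper's: coherence of the order-$r$ ideals gives finite local generation, and analytic continuation over the connected transversal $T_s$ propagates the vanishing from $a$ to every other point -- you organize this as an open-and-closed argument while the paper runs it along a path covered by finitely many coherence neighbourhoods, but these are the same argument. Your additional appeal to Proposition \ref{LemmeReductionModLin} to strip out the target variables is harmless but not needed here (the paper reserves it for Lemma \ref{lemma:ev}): one can apply the identity principle directly to $(x,X)\mapsto E\bigl((x,X),\tilde g(x,X),\partial\tilde g(x,X),\ldots\bigr)$, which is already holomorphic on the open set of source points whose graph under $\tilde g$ lies in the domain of $E$.
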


\begin{proof}
For all $r \in \mathbb{N}$, the ideal $(\mathcal{I}_{\mathcal{G}al(A(x))})_{r} = \mathcal{I}_{\mathcal{G}al(A(x))} \cap \mathcal{O}_{J^*_{r}(M,M)}$ is coherent. Thus, for any point $(y_0,\bar{y}_0) \in M^2$, there exists an open neighbourhood $\Omega$ of
$(y_0,\bar{y}_0)$ in $M^2$, and equations $E_1^{\Omega}, \ldots ,E_l^{\Omega}$ of $(\mathcal{I}_{\mathcal{G}al(A(x))})_{r}$ defined on
the open set $\Omega$ such that: $$\left( (\mathcal{I}_{\mathcal{G}al(A(x))})_{r} \right) _{|\Omega }=\left(
\mathcal{O}_{J^*_{r}(M,M)} \right) _{|\Omega } E_1^{\Omega} + \cdots + \left( \mathcal{O}_{J^*_{r}(M,M)} \right)
_{|\Omega } E_l^{\Omega}.$$
Let $a_1 \in T_s=\Delta \times \mathbb{C}^{\nu}$. Let $\gamma : [0,1] \rightarrow T_s$ be a path in $T_s$ such that
$\gamma (0)=a$ and $\gamma (1)=a_1$. Let $\left\lbrace \Omega_0, \ldots ,\Omega_N \right\rbrace$ be a finite covering
of the path $\gamma ([0,1]) \times \tilde{g}(\gamma ([0,1]))$ in $T_s \times T_t$ by connected open sets $\Omega
\subset (T_s \times T_t)$ like above, and such that the origin $(\gamma (0),g(\gamma (0)))=(a,g(a))$ belongs to
$\Omega_0$.\\
The germ of $g$ at the point $a$ is an element of $sol(\mathcal{G}al(A(x)))$. Therefore, one has
$E_k^{\Omega_0}((x,X),g(x,X),\partial g(x,X),\ldots )$ $\equiv 0$ in a neighbourhood of $a$ for all $1 \leq l
\leq k$. Moreover, by analytic continuation, one has also $E_k^{\Omega_0}(x,X,\tilde{g}(x,X),$ $\partial
\tilde{g}(x,X),\ldots ) \equiv 0$ on the source projection of $\Omega_0$ in $M$. It means that the germs of
$\tilde{g}$ at any point of the source projection of $\Omega_0$ are solutions of $(\mathcal{I}_{\mathcal{G}al(A(x))})_{r}$.\\
Then, step by step, one gets that the germs of $\tilde{g}$ at any point of the source projection of $\Omega_k$ are
solutions of $(\mathcal{I}_{\mathcal{G}al(A(x))})_{r}$ and, in particular, the germ of $\tilde{g}$ at the point $a_1$ is also a solution of $(\mathcal{I}_{\mathcal{G}al(A(x))})_{r}$.
\end{proof}

\noindent
This Proposition \ref{DefSolTransversale} means that any solution of the Galois $D$-groupoid
$\mathcal{G}al(A(x))$ is naturally defined in a neighbourhood of a transversal of $M$, above.

\begin{rmk}
In some sense, the "equations" counterpart of this proposition is Lemma \ref{lemma:ev}.
\end{rmk}

The solutions of $\mathcal{G}al(A(x))$ which fix the transversals of $M$ can be interpreted as solutions of a
sub-$D$-groupoid of $\mathcal{G}al(A(x))$, partly because this property can be interpreted in terms of partial
differential equations. Actually, a germ of a diffeomorphism of $M$ fix the transversals of $M$ if and only if it is a
solution of the equation $\bar{x}-x$.

The ideal of $\mathcal{O}_{J^*_0(M,M)}$ generated by the equation $\bar{x}-x$ satisfies the conditions
\textit{(i)},\textit{(ii)}, and \textit{(iii)} of \ref{subsec:defmalgrange}. Hence, thanks to Theorem 4.4.1 of \cite{MalgGGF}, the reduced differential ideal it generates defines a $D$-groupoid:

\begin{defn}\label{defn:trv}
We call $\mathcal{T}rv$ the $D$-groupoid generated by the equation $\bar{x}-x$.
\end{defn}

Its solutions, $sol(\mathcal{T}rv)$, are the germs of the local diffeomorphisms of $M$ of the form: $(x,X) \mapsto (x,\bar{X}(x,X))$.

\begin{defn}\label{defn:tildegal}
We call $\wtilde{\mathcal{G}al(A(x))}$ the \textit{intersection} $D$-groupoid  $\mathcal{G}al(A(x)) \cap \mathcal{T}rv$,
in the sense of Theorem 4.5.1 of \cite{MalgGGF}, whose defining ideal of equations $\mathcal{I}_{\wtilde{\mathcal{G}al(A(x))}}$ is generated by $\mathcal{I}_{\mathcal{G}al(A(x))}$ and $\mathcal{I}_{\mathcal{T}rv}$.
\end{defn}

The solutions of $sol(\wtilde{\mathcal{G}al(A(x))})$ coincide with $sol(\mathcal{G}al(A(x))) \cap sol(\mathcal{T}rv)$, that are exactly the solutions of $\mathcal{G}al(A(x))$ of the form $(x,X) \mapsto (x,\beta (x)X)$. They are also naturally defined in neighbourhoods of transversals of $M$.

\begin{prop}\label{prop:galt}
Let $x_0 \in \P^1_\mathbb{C}$. The set of solutions of $\wtilde{\mathcal{G}al(A(x))}$ defined in a neighbourhood of the
transversal $\left\lbrace x_0 \right\rbrace \times \mathbb{C}^\nu$ of $M$ can be identified with a
subgroup of $GL_\nu(\mathbb{C}\left\lbrace x-x_0 \right\rbrace )$.
\end{prop}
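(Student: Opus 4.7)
The plan is to build the identification in three steps: first, extract the shape of a solution from the inclusions $\wtilde{\mathcal{G}al(A(x))}\subset \mathcal{G}al(A(x))\subset \mathcal{L}in$ and $\wtilde{\mathcal{G}al(A(x))}\subset\mathcal{T}rv$; second, use Proposition~\ref{DefSolTransversale} to spread a germ defined near the transversal to a genuine cylindrical diffeomorphism; third, check that the groupoid axioms of $\wtilde{\mathcal{G}al(A(x))}$ translate into group axioms for the associated matrices in $GL_\nu(\C\{x-x_0\})$.

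More precisely, let $g$ be a solution of $\wtilde{\mathcal{G}al(A(x))}$ at a point $(x_0,X_0)$. From $g\in sol(\mathcal{L}in)$ we have the form $g(x,X)=(\alpha x,\beta(x)X)$ for some $\alpha\in\C^*$ and some locally defined $\beta(x)\in GL_\nu(\C)$, and from $g\in sol(\mathcal{T}rv)$ we get $\alpha x=x$, hence $\alpha=1$ and $g(x,X)=(x,\beta(x)X)$ with $\beta(x)\in GL_\nu(\C\{x-x_0\})$. This already gives a well-defined injective map
\[
\Phi:\bigl\{g\in sol(\wtilde{\mathcal{G}al(A(x))})\text{ defined near }\{x_0\}\times\C^\nu\bigr\}\longrightarrow GL_\nu(\C\{x-x_0\}),\quad g\longmapsto\beta.
\]
Proposition~\ref{DefSolTransversale}, applied to $\mathcal{G}al(A(x))$ (and therefore to its sub-$D$-groupoid $\wtilde{\mathcal{G}al(A(x))}$), shows that once $\beta$ is prolonged analytically to a connected neighbourhood $\Delta$ of $x_0$ on which it takes values in $GL_\nu(\cO(\Delta))$, the associated cylindrical diffeomorphism $(x,X)\mapsto(x,\beta(x)X)$ of $\Delta\times\C^\nu$ has all its germs in $sol(\wtilde{\mathcal{G}al(A(x))})$. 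Thus $\Phi$ really depends only on $\beta$ as an element of $GL_\nu(\C\{x-x_0\})$, and conversely a series $\beta$ in the image of $\Phi$ is realised by a whole cylindrical solution.

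It remains to promote $\Phi$ to a group homomorphism. If $g_i(x,X)=(x,\beta_i(x)X)$, $i=1,2$, are two solutions defined near the transversal $\{x_0\}\times\C^\nu$, their composition $g_1\circ g_2$ is again defined on a (smaller) neighbourhood of the same transversal because both source and target of each $g_i$ contain a cylinder around $\{x_0\}\times\C^\nu$; one computes $(g_1\circ g_2)(x,X)=(x,\beta_1(x)\beta_2(x)X)$, and since $sol(\wtilde{\mathcal{G}al(A(x))})$ is a groupoid by the $D$-groupoid axioms (i'), (ii'), (iii') of \S\ref{subsec:defmalgrange}, the composition is still a solution. Hence $\Phi(g_1\circ g_2)=\Phi(g_1)\Phi(g_2)$. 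The identity germ $(x,X)\mapsto(x,X)$ is a solution and maps to $I_\nu$, and the inversion axiom (iii') gives $(x,X)\mapsto(x,\beta(x)^{-1}X)$ in $sol(\wtilde{\mathcal{G}al(A(x))})$, so $\Phi$ is closed under inverses. The image of $\Phi$ is therefore a subgroup of $GL_\nu(\C\{x-x_0\})$ and $\Phi$ is a group isomorphism onto it.

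The only mildly delicate point is making sure that composition and inversion never leave the class of germs ``defined in a neighbourhood of $\{x_0\}\times\C^\nu$''; this is exactly what Proposition~\ref{DefSolTransversale} guarantees, since each individual $g_i$ extends to a cylinder $\Delta_i\times\C^\nu$ with $x_0\in\Delta_1\cap\Delta_2$, and the composed (resp. inverse) diffeomorphism is then defined on the cylinder $(\Delta_1\cap\Delta_2)\times\C^\nu$ (resp. on $\Delta_i\times\C^\nu$). Once this is in place, the identification is immediate.
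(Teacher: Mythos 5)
Your proposal is correct and follows essentially the same route as the paper's (very terse) proof: both use the inclusions into $\mathcal{L}in$ and $\mathcal{T}rv$ to get the shape $(x,X)\mapsto(x,\beta(x)X)$, use Proposition~\ref{DefSolTransversale} to extend germs to cylinders around the transversal, and observe that since source and target both fix the transversal (the paper phrases this as working at the stable point $(x_0,0)$), the groupoid law becomes the group law $\beta_1\beta_2$ in $GL_\nu(\C\{x-x_0\})$. You have simply spelled out the details that the paper defers to Proposition 3.3.2 of Granier's thesis.
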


\begin{proof}
The solutions of the $D$-groupoid $\wtilde{\mathcal{G}al(A(x))}$ defined in a neighbourhood of the transversal
$\left\lbrace x_0 \right\rbrace \times \mathbb{C}^\nu$ can be considered, without loosing any information, only in a
neighbourhood of the stable point $(x_0,0) \in M$. At this point, the groupoid structure of
$sol(\wtilde{\mathcal{G}al(A(x)))}$ is in fact a group structure because the source and target points are always $(x_0,0)$. Thus, considering the matrices $\beta(x)$ in
the solutions $(x,X) \mapsto (x,\beta(x)X)$ of $\wtilde{\mathcal{G}al(A(x)))}$ defined in a neighbourhood of $\left\lbrace x_0 \right\rbrace \times
\mathbb{C}^\nu$, one gets a subgroup of $GL_\nu(\mathbb{C}\left\lbrace x-x_0 \right\rbrace )$. More details are given in
the proof of Proposition 3.3.2 of \cite{GranierThese}.
\end{proof}

In the particular case of a constant linear $q$-difference system, that is with $A(x)=A \in GL_\nu(\mathbb{C})$, the
solutions of the Galois $D$-groupoid $\mathcal{G}al(A)$ are in fact global diffeomorphisms of $M$, and the set of those that fix the transversals of $M$ can be identified with an algebraic subgroup of $GL_\nu(\mathbb{C})$. This can be shown using a better bound than $\mathcal{L}in$ for the Galois $D$-groupoid of a constant linear $q$-difference system (\textit{cf} Proposition 3.4.2 of \cite{GranierThese}), or computing the $D$-groupoid $\mathcal{G}al(A)$ directly (\textit{cf} Theorem 2.1 of \cite{GranierFourier} or Theorem 4.2.7 of \cite{GranierThese}). Moreover, the explicit computation allows to observe that this subgroup corresponds to the usual $q$-difference Galois group as described in
\cite{SauloyENS} of the constant linear $q$-difference system $X(qx)=AX(x)$
(\cf Theorem 4.4.2 of \cite{GranierThese} or Theorem 2.4 of \cite{GranierFourier}).


\newcommand{\noopsort}[1]{}

\end{document}